\newtheorem{theorem}{Theorem}[section]
\newtheorem{corollary}[theorem]{Corollary}
\newtheorem{lemma}[theorem]{Lemma}
\newtheorem{definition}[theorem]{Definition}
\newtheorem{proposition}[theorem]{Proposition}
\theoremstyle{remark}
\newtheorem{remark}[theorem]{Remark}
\numberwithin{equation}{section}
\newcommand{\R}{\mathbb{R}}
\newcommand{\N}{\mathbb{N}}
\newcommand{\E}{\mathcal{E}}
\newcommand{\Em}{\mathcal{E}_m}
\newcommand{\Eom}{\mathcal{E}_{0,m}}
\newcommand{\Epm}{\mathcal{E}_{p,m}}
\newcommand{\Fpm}{\mathcal{F}_{p,m}}
\newcommand{\Ecm}{\mathcal{E}_{\chi, m}}
\newcommand{\Eam}{\mathcal{E}^a_m}
\newcommand{\F}{\mathcal{F}}
\newcommand{\Fm}{\mathcal{F}_m}
\newcommand{\Fam}{\mathcal{F}^a_m}
\newcommand{\Nm}{\mathcal{N}_m}
\newcommand{\SHm}{SH_m}
\newcommand{\bC}{\mathbb{C}}
\newcommand{\ca}{cap_{m,\Omega}}
\begin{document}
\author{Do Thai Duong \textit{$^{1}$} and Van Thien Nguyen \textit{$^{2}$} }
\address{\textit{$^{1}$}~Institute of Mathematics, Vietnam Academy of Science and Technology, 18 Hoang Quoc Viet, Cau Giay, Hanoi, Vietnam}
\email{dtduong@math.ac.vn}
\address{\textit{$^{2}$}~FPT University, Education zone, Hoa Lac high tech park, Km29 Thang Long highway,
Thach That ward, Hanoi, Vietnam}
\email{thiennv15@fe.edu.vn}

\title{On the weighted $m-$energy classes}

\subjclass[2000]{Primary 32U15}
\keywords{}
\date{\today}
\maketitle
\begin{abstract}
In this article, we investigate the weighted $m-$subharmonic functions. We shall give some properties of this class and consider its relation to the $m-$Cegrell classes. We also prove an integration theorem and an almost everywhere convergence theorem for this class.
\end{abstract} 
\tableofcontents

\section{Introduction}
In 1985, Caffarelli, Nirenberg and Spruck \cite{cns} generalized the notion of subharmonic and plurisubharmonic functions  which called be $m$-subharmonic functions. A smooth function $u$ defined in open subset $\Omega$ of $\mathbb{C}^n$ is $m$-subharmonic if 
$$\sigma_k(u)=\sum_{1\leq j_1<\cdots<j_k\leq n}\lambda_{j_1}\cdots \lambda_{j_k}\geq 0, \forall k =1,\cdots,m$$
where $(\lambda_{j_1},\cdots,\lambda_{j_n})$ is the eigenvalue vector of the complex Hessian matrix of $u$.

Generally, the complex Hessian operator is neither linearly nor holomorphically invariant. This makes it is harder to study when compared to the classical Laplacian and Monge-Amp \`ere operators. The pluripotential theory of $m$-subharmonic functions developed by B\l ocki in \cite{bl05} where
the complex Hessian operator $H_m(u)= (dd^cu)^m\wedge (dd^c|z|^2)^{n-m}$ is well-defined for $m$-subharmonic function $u$ is a Radon measure (see \cite{bl05}). 

For $1\leq m \leq n,$, we will always assume that $\Omega$ is a bounded $m$-hyperconvex domain
to ensure the existence of $m$-subharmonic functions defined on $\Omega$. Since the complex Hessian operator can not be defined for all $m$-subharmonic functions, a question that arose is find a natural domain of the complex Hessian operator. Lu extended the results of Cegrell (\cite{C98, C04}) to introduce the Cegrell classes $\E_m(\Omega), \F_m(\Omega), \E_{p,m}(\Omega)$ for $m$-subharmonic functions. He showed that the
complex $m$-Hessian operator is well-defined in these classes. Moreover, he also proved
that the class $\E_m(\Omega)$ is the biggest class where the Hessian operator can be defined (see \cite{luthesis,lu13a,lu13b}).

To study the range of the complex Monge-Amp\`ere operator in the K\"ahler manifold setting, Guedj and Zeriahi in \cite{GZ07} gave a notion of weighted energy class $\E_{\chi}(X,\omega)$. This class is a subset of all $\omega$-plurisubharmonic functions with full mass  and it generalizes the class of $\omega$-plurisubharmonic functions with finite energies. Since then, there are many works investigated the weighted energy classes, e.g. \cite{Ben09, Ben11, BGZ, rafal10, HH11}, \dots

Recently, Cegrell classes for $m$-subharmonic functions have been invesgated by many authors. For more details we refer the reader to \cite{AC20, AC23, AC22, dk14, dk17, cu13, cu14, DD22, th16, NVT18, NVT19, sa, VN17}, \dots. The weighted $m$-subharmonic functions were introduced by Vu in \cite{VVH16}.

In this paper, by replying ideas from \cite{Ben09, BGZ, HH11} we shall investigate more properties of the weighted $m$-subharmonic class $\E_{m,\chi}(\Omega)$ for any increasing weight function $\chi\colon \mathbb{R}^{-} \to \mathbb{R}^{-}$. In Section \ref{sec3}, we give some inclusions between the weighted $m$-energy class and well known $m$-Cegrell classes. More details, we show that as $\chi$ is not the zero function then $\E_{\chi,m}(\Omega)\subset \E_m(\Omega)$, and $\E_{m,\chi}(\Omega)$ is a subclass of the class $\mathcal{N}_m(\Omega)$ when $\chi(-t)<0$ for all $t>0$ (see Theorem \ref{Main theorem 1}). In the case $\chi(-\infty)=-\infty$, the weighted $m$-energy functions have complex Hessian measures vanish on $m$-polar sets (see Theorem \ref{chi=-infty}). In addition if $\chi(-\infty)=-\infty$ and $\chi(0)=0$, we obtain the inclusion $\E_{m,\chi}\subset \F_m^a(\Omega)$, the Cegrell class of $m$-subharmonic functions with finite energy whose complex Hessian measures vanish on $m$-polar sets. We also give characterizations of this class in term of $\chi$-energy as well as $m$-capacity (see Theorem \ref{thm:in-ca} and Proposition \ref{pro:finiteint}). In Section \ref{sec4}, with an addition condition on $\chi$, we shall prove the convergence for the weighted $m$-energy class (see Theorem \ref{thm:convergence}). And in the rest of Section \ref{sec4}, we formulate an integration theorem that help us creating more elements in this class (see Theorem \ref{thm:integration}).

\section{Cegrell classes for $m$-subharmonic functions}
This section is a brief introduction which summarizes the definition of Cegrell classes for $m$-subharmonic functions and theirs properties. More details and proofs can be found in \cite{luthesis, th16, NVT19}.

We shall use the canonical $(1,1)$-form on $\bC^n$:
$$\omega =dd^c|z|^2=2i\sum_{j=1}^n dz_j\wedge d\bar{z}_j.$$
Denote by $\SHm(\Omega)$ the collection of all $m$-subharmonic functions on $\Omega$.
A set $E\subset\mathbb{C}^n$ is called \textit{$m$-polar} if  $E\subset \{v=-\infty\}$ for some $v\in \SHm(\bC^n)$ and $v\not\equiv -\infty$.

In pluripotential theory, capacities play an important role. We define analogous capacities in the setting of $m$-sh functions.
\begin{definition}\label{def:mcap}
Let $E$ be a Borel subset of $\Omega$. The $m$-capacity $\ca (E)$ of $E$ with respect to $\Omega$ is defined by
$$\ca(E)=\sup\left\{\int_E H_m(u), u\in SH_m(\Omega), -1\leq u\leq 0\right\}.$$
\end{definition}

Recall that a bounded domain $\Omega\subseteq \mathbb{C}^n$, is called \textit{$m$-hyperconvex} if there exists a bounded $m$-subharmonic function $\rho : \Omega \to (-\infty, 0)$ such that the closure of the set $\{z\in \Omega \colon \rho(z)<c\}$ is compact in $\Omega$, for every $c\in (-\infty, 0)$. The function $\rho$ can be scaled by a constant of proportion such that $\rho(z)\in [-1,0)$, for all $z\in \Omega$. Such $\rho$ is called \textit{the exhaustion function} of $\Omega$.

Throughout this chapter, we always assume that $\Omega$ is an $m$-hyperconvex domain. The Cegrell classes for $m$-subharmonic were introduced by Lu for classes $\Eom, \Epm, \Fm, \Fpm, \Em$ ( see\cite{luthesis}) and by the second author of this article for the class $\Nm$ ( see \cite{NVT19}).
\begin{definition}[Cegrell classes]
\normalfont
\label{def: cegrell classes}

\begin{itemize}
\item We let $\Eom(\Omega)$ to be the class of all bounded functions in $SH_m(\Omega)$
such that
$$\lim_{z\to\partial\Omega}u(z)=0 \ \mbox{and} \  \int_\Omega H_m(u) < +\infty.$$
\item For each $p > 0$, let $\Epm(\Omega)$ denote the class of all functions $u\in SH_m^-(\Omega)$ such that there exists a decreasing sequence $\{u_j\}\subset \Eom(\Omega)$ satisfying $u_j\downarrow u$ on $\Omega$ and
$$\sup_j\int_\Omega (-u_j)^pH_m(u_j)<+\infty.$$
If we require moreover that $\sup_j \int_\Omega H_m(u_j ) < +\infty$ then, by definition, $u$ belongs to $\Fpm(\Omega)$.
\item A function $u\in \SHm(\Omega)$ belongs to $\Em(\Omega)$ if for each $z_0\in \Omega$, there
exist an open neighborhood $U\subset \Omega$ of $z_0$ and a decreasing sequence $\{u_j\}\subset \Eom(\Omega)$
such that $u_j\downarrow u $ in $U$ and $\sup_j\int_\Omega H_m(u_j)< +\infty$.
\item Denote by $\Fm(\Omega)$ the class of functions $u\in SH_m(\Omega)$ such that there exists a
sequence $\{u_j\}\subset \Eom(\Omega)$ decreasing to $u$ in $\Omega$ and $\sup_j \int_\Omega H_m(u_j) <+\infty$.
\item
Let $u\in SH_m(\Omega)$, and let $\{\Omega_j\}$ be a fundamental sequence of $\Omega$, i.e., $\Omega_j\Subset\Omega_{j+1}$ and $\cup_j\Omega_j=\Omega$.
Set
$$u^j(z) = \left(\sup\{\varphi(z) \colon \varphi\in SH_m(\Omega), \varphi\leq u \ \textrm{on} \ \Omega_j^c\}\right)^*,$$
where $\Omega_j^c$ denotes the complement of $\Omega_j$ in $\Omega$.
The function $\tilde{u}$ defined by
$$\tilde{u}=\left(\lim_{j\to\infty}u^j\right)^*$$
We define $\Nm(\Omega)$ be the set of all functions $u\in\Em(\Omega)$ such that $\tilde{u}=0$.
\end{itemize}
\end{definition}

\begin{remark}\label{re:cegrell}
\begin{itemize}
\item[(i)] We have the following strictly inclusions
\begin{align*}
& \Eom(\Omega)\subset \Fpm(\Omega) \subset \Fm(\Omega)\subset \Nm(\Omega) \subset \Em(\Omega),\\
& \Eom(\Omega)\subset \Fpm(\Omega) \subset \Epm(\Omega)\subset \Em(\Omega),\\
& \SHm^{-}(\Omega)\cap L^\infty_{\text{loc}}(\Omega)\subset \Em(\Omega).
\end{align*}
There is no inclusion between $\Fm$ and $\Epm$. An example has been showed in \cite{ACH07}.

\item[(ii)] Every function $u\in \Nm(\Omega)$ has zero boundary values in sense that 
$$\limsup_{z\to \partial\Omega} u(z) = 0.$$

\item[(iii)] The class $\Eom(\Omega)$ are sometimes called test functions since each smooth function with compact support in $\Omega$ can be written as the difference of two smooth functions in $\Eom(\Omega)$.
\item[(iv)]  $\Em(\Omega)$ is the largest set of non-positive $m$-subharmonic
functions where the complex Hessian operator is well-defined.
\item[(v)] We can think $\Nm(\Omega)$ is the  class for which the
smallest maximal $m$-subharmonic majorant is identically equal to $0$.
\end{itemize}
\end{remark}

Let $\mathcal{K}$ be one of the classes $\Eom(\Omega),\Fm(\Omega),\Nm(\Omega), \Em(\Omega), \Epm(\Omega), \Fpm(\Omega)$. Denote by $\mathcal{K}^a$ the set of all functions in $\mathcal{K}$ whose Hessian measures vanish on all $m$-polar sets of $\Omega$. The following results was shown in \cite{luthesis, NVT19}.

\begin{lemma}\label{lem:convexcone}
\begin{itemize}
\item[(i)] $\mathcal{K}$ is a convex cone, i.e., if $u,v\in \mathcal{K}$ then $au + bv \in \mathcal{K}$ for arbitrary nonnegative constants $a, b$. Moreover, if $u\in \mathcal{K}, v\in \SHm^{-}(\Omega)$ then $\max(u, v)\in \mathcal{K}$.
\item[(ii)] The Hessian measures of functions in $\Epm (\Omega), \Fm (\Omega)$ vanish on $m$-polar subsets of $\Omega$. This means that $\Epm (\Omega), \Fm (\Omega)$ are proper subsets of $\Em^a(\Omega)$.
\end{itemize}
\end{lemma}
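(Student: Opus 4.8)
The plan is to treat the convex-cone assertion (i) and the polar-regularity assertion (ii) by reducing every class to the defining sequences of Definition~\ref{def: cegrell classes} and then invoking the standard mixed Hessian (Chern--Levine--Nirenberg/Hölder type) inequalities for $m$-subharmonic functions.

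For (i), the $m$-subharmonicity of $au+bv$ for $a,b\ge 0$ is immediate, since $\SHm(\Om)$ is itself a convex cone and the boundary behaviour and boundedness demanded by each class are visibly preserved; the only real content is the control of Hessian masses. Expanding multilinearly,
$$H_m(au+bv)=\sum_{k=0}^m\binom{m}{k}a^k b^{m-k}(dd^c u)^k\wedge(dd^c v)^{m-k}\wedge\om^{n-m},$$
it suffices to bound the mixed masses. For $u,v\in\Eom(\Om)$ one has $\int_\Om (dd^c u)^k\wedge(dd^c v)^{m-k}\wedge\om^{n-m}\le(\int_\Om H_m(u))^{k/m}(\int_\Om H_m(v))^{(m-k)/m}$, whence $\int_\Om H_m(au+bv)\le\big(a(\int_\Om H_m(u))^{1/m}+b(\int_\Om H_m(v))^{1/m}\big)^m$. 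First I would run this on a bounded pair to settle $\Eom(\Om)$, then apply it to the defining sequences $u_j\downarrow u$, $v_j\downarrow v$ of the remaining classes: since $au_j+bv_j\downarrow au+bv$ lies in $\Eom(\Om)$, the displayed bound transfers the class-defining mass condition (for $\Epm(\Om)$ and $\Fpm(\Om)$ one also uses the corresponding weighted mixed-energy inequalities), while for $\Nm(\Om)$ I would argue through the monotonicity and homogeneity of $u\mapsto\tilde u$ from \cite{NVT19}. For the $\max$ assertion, note $u\le\max(u,v)\le 0$; approximating $v\in\SHm^-(\Om)$ by a decreasing sequence in $\Eom(\Om)$ (e.g. $\max(v,j\rho)$ with $\rho$ the exhaustion function) and using $\max(u_j,v_j)\downarrow\max(u,v)$ together with the monotonicity $\int_\Om H_m(\max(u_j,v_j))\le\int_\Om H_m(u_j)$ (a consequence of the comparison principle) shows that $\max(u,v)$ inherits membership in $\K$.

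For (ii), the backbone is the bounded case: if $w\in\Eom(\Om)$ with $-1\le w\le 0$, then Definition~\ref{def:mcap} gives $H_m(w)(E)\le\ca(E)$ for every Borel $E$, so $H_m(w)$ vanishes on $m$-polar sets, these having zero $m$-capacity. The two classes are then handled by controlling the escape of mass to $\{w=-\infty\}$. For $u\in\Epm(\Om)$ the weight does this directly: once one knows $\int_\Om(-u)^p H_m(u)<+\infty$ in the limit, the infinitude of $(-u)^p$ on $\{u=-\infty\}$ forces $H_m(u)(\{u=-\infty\})=0$; for $u\in\Fm(\Om)$ the same conclusion follows from the convergence of total masses $\int_\Om H_m(\max(u,-j))\uparrow\int_\Om H_m(u)$, which leaves no mass on $\{u=-\infty\}$. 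To pass from $\{u=-\infty\}$ to an arbitrary $m$-polar set $E$, I would pick a potential $v$ in the same class with $E\subset\{v=-\infty\}$ (every $m$-polar set admits such a $v\in\Fpm(\Om)\subset\Fm(\Om)\cap\Epm(\Om)$), observe from the multilinear expansion that $H_m(u)\le H_m(u+v)$, invoke (i) to place $u+v$ in the class, and conclude $H_m(u)(E)\le H_m(u+v)(\{u+v=-\infty\})=0$. Properness then follows from Remark~\ref{re:cegrell}(i): since there is no inclusion between $\Fm(\Om)$ and $\Epm(\Om)$, any $g\in\Fm(\Om)\setminus\Epm(\Om)$ and $h\in\Epm(\Om)\setminus\Fm(\Om)$ already lie in $\Em^a(\Om)$, witnessing $\Epm(\Om)\subsetneq\Em^a(\Om)$ and $\Fm(\Om)\subsetneq\Em^a(\Om)$.

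The main obstacle is the limiting step in (ii): showing that no Hessian mass concentrates on $\{w=-\infty\}$ in the limit of the defining sequence. This is precisely where the finer structure of $\Fm(\Om)$ and $\Epm(\Om)$ (convergence of total masses, respectively finite weighted energy), as opposed to mere membership in $\Em(\Om)$, is indispensable, and it is also where one must verify that the chosen polar potential $v$ genuinely lies in the relevant class so that (i) applies.
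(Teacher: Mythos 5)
The paper itself offers no proof of Lemma \ref{lem:convexcone}: it is quoted from \cite{luthesis, NVT19}, so your proposal has to be judged on its own merits. Your outline for part (i) is the standard Cegrell--Lu argument (multilinear expansion of $H_m(au+bv)$, mixed-mass and weighted-energy inequalities, total-mass monotonicity for the $\max$ statement), and it is sound in outline; the one small point is that for $\Nm(\Omega)$ monotonicity and homogeneity of $u\mapsto\tilde u$ only give $\widetilde{au+bv}\le a\tilde u=0$, and you also need superadditivity, $\widetilde{u_1+u_2}\ge\tilde u_1+\tilde u_2$, to get the reverse bound.

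The genuine gap is in part (ii), in your treatment of $\Fm(\Omega)$. The step ``convergence of total masses $\int_\Omega H_m(\max(u,-j))\uparrow\int_\Omega H_m(u)$ leaves no mass on $\{u=-\infty\}$'' is a non sequitur: convergence of the total masses says nothing about where the limit measure sits. In fact no argument can close this gap, because the assertion is false. Take $\Omega=\B$ and let $u$ be the $m$-Hessian fundamental solution normalized to vanish on $\partial\B$, i.e.\ $u(z)=\log|z|$ if $m=n$ and $u(z)=1-|z|^{2-2n/m}$ if $m<n$. Then $u_j:=\max(u,-j)\in\Eom(\B)$, all the mass of $H_m(u_j)$ sits on the sphere $\{u=-j\}$ and the total mass is independent of $j$, so $u\in\Fm(\B)$ and your total-mass hypothesis holds; yet $H_m(u)=c_{n,m}\delta_0$ charges the $m$-polar set $\{0\}=\{u=-\infty\}$. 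Thus $\Fm(\Omega)\not\subset\Eam(\Omega)$ --- this is precisely why the subclass $\Fam(\Omega)$ has to be introduced --- and the lemma as printed is itself misstated on this point: the paper's own application of it in the proof of Theorem \ref{chi=-infty} only yields its conclusion if $\Fm(\Omega)$ is \emph{not} contained in $\Eam(\Omega)$. A blind attempt should have detected and flagged this rather than supplied an argument for it. Two knock-on problems in your write-up: the reduction of a general $m$-polar set $E$ to $\{u=-\infty\}$ via a potential $v$ ``in the same class'' rests on an unproved existence claim ($E\subset\{v=-\infty\}$ with $v\in\Fpm(\Omega)$) and, for $\K=\Fm$, on the false inclusion; and the final properness argument, which places elements of $\Fm(\Omega)\setminus\Epm(\Omega)$ inside $\Eam(\Omega)$, collapses for the same reason. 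The $\Epm(\Omega)$ half of (ii) is true and your idea for it is essentially right, but it is cleaner to drop the potential trick entirely: the Chebyshev bound $H_m(u_j)(\{u_j<-t\})\le t^{-p}\int_\Omega(-u_j)^pH_m(u_j)$ together with weak convergence on the open sets $\{u<-t\}$ gives $H_m(u)(\{u=-\infty\})=0$, and Theorem \ref{vanish on polar sets} disposes of $m$-polar sets inside $\{u>-\infty\}$.
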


\begin{theorem}\cite[Theorem 3.1]{NVT18}\label{Inequalities for Capacity}
	Let $u\in\Fm(\Omega)$. Then for all $s,t>0$, we have
	$$t^m\text{Cap}_{m,\Omega}(\{u<-s-t\})\leq\int\limits_{\{u<-s\}}(dd^cu)^m\wedge\omega^{n-m}\leq s^m\text{Cap}_{m,\Omega}(\{u<-s\}).$$
\end{theorem}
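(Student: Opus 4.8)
The plan is to derive both inequalities from the comparison principle for the complex Hessian operator together with a judicious choice of competitor functions, in the spirit of the Bedford--Taylor/Ko\l odziej capacity estimates transplanted to the $m$-subharmonic setting (the comparison principle and the truncation properties used below being available in \cite{luthesis, NVT19}). Throughout, recall that $H_m(u)=(dd^cu)^m\wedge\om^{n-m}$ and that, since $u\in\Fm(\Om)$ has zero boundary values, each sublevel set $\{u<-s\}$ is open and relatively compact in $\Om$, so $\ca(\{u<-s\})$ is computed over an open set.

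For the left-hand inequality, fix $w\in\SHm(\Om)$ with $-1\le w\le 0$ and set $v=\max(u,\,tw-s)$, which is again $m$-subharmonic with the same boundary values as $u$. First I would note that $tw\ge -t$ forces $v=tw-s$ on $\{u<-s-t\}$, whence $H_m(v)=t^mH_m(w)$ there, while $tw\le 0$ gives $\{u<v\}=\{u<tw-s\}\subseteq\{u<-s\}$ and $\{u<-s-t\}\subseteq\{u<v\}$. The chain
$$t^m\int_{\{u<-s-t\}}H_m(w)=\int_{\{u<-s-t\}}H_m(v)\le\int_{\{u<v\}}H_m(v)\le\int_{\{u<v\}}H_m(u)\le\int_{\{u<-s\}}H_m(u)$$
then follows from positivity of $H_m(v)$, the above inclusions, and the comparison principle applied on $\{u<v\}$. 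Taking the supremum over all admissible $w$ yields $t^m\ca(\{u<-s-t\})\le\int_{\{u<-s\}}H_m(u)$.

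For the right-hand inequality the naive comparison runs the wrong way, so I would instead exploit that the Hessian mass of a truncation is collected onto the corresponding level set. Fix $s'>s$ and put $v=\max(u,-s')$, so that $w_{s'}:=v/s'$ is admissible ($-1\le w_{s'}\le 0$). On the open set $\{u<-s'\}$ the function $v$ is constant, hence $H_m(v)=0$ there; on $\{-s'<u<-s\}$ one has $v=u$ and $H_m(v)=H_m(u)$; and the crucial input is the level-set identity
$$\int_{\{u=-s'\}}H_m(v)=\int_{\{u\le -s'\}}H_m(u),$$
which comes from the conservation of total Hessian mass under truncation for functions of finite energy. Summing the three contributions gives $\int_{\{u<-s\}}H_m(v)=\int_{\{u<-s\}}H_m(u)$, and therefore
$$\int_{\{u<-s\}}H_m(u)=(s')^m\int_{\{u<-s\}}H_m(w_{s'})\le (s')^m\,\ca(\{u<-s\}).$$
Letting $s'\downarrow s$ produces the desired bound $\int_{\{u<-s\}}H_m(u)\le s^m\ca(\{u<-s\})$.

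The hard part will be making the level-set identity rigorous: because $u$ is unbounded, the decomposition $H_m(\max(u,-s'))=\mathbf 1_{\{u\ge -s'\}}H_m(u)+\mathbf 1_{\{u=-s'\}}H_m(\max(u,-s'))$ must be justified through bounded truncations $\max(u,-k)$ and a limiting argument, and one has to control how $H_m$ charges the level set $\{u=-s'\}$. The latter is harmless for all but countably many values of $s'$, so the final limit $s'\downarrow s$ can be taken along admissible values; once this mass-conservation input is secured, both estimates are immediate and the two-sided bound follows by combining the two halves.
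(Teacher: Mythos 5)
A preliminary remark: the paper never proves this statement — it is imported verbatim from \cite[Theorem 3.1]{NVT18} — so there is no in-paper argument to compare yours against. Judged on its own merits, your proof is correct, and it is essentially the standard argument from the cited literature (the $m$-Hessian transplant of \cite[Lemma 2.2]{BGZ}). The left-hand inequality is complete as written: locality of $H_m$ on the open set $\{u<-s-t\}$, the inclusions $\{u<-s-t\}\subset\{u<v\}=\{u<tw-s\}\subset\{u<-s\}$, and the comparison principle $\int_{\{u<v\}}H_m(v)\le\int_{\{u<v\}}H_m(u)$ (valid here because $u\in\Fm(\Omega)$ and $v=\max(u,tw-s)$ is a bounded negative $m$-subharmonic function) give the chain, and taking the supremum over admissible $w$ finishes. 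For the right-hand inequality, your device of working with $s'>s$ and then letting $s'\downarrow s$ is exactly what is needed, and it is not removable: for $s'=s$ the swept mass lies on $\partial\{u<-s\}$, outside the open set whose capacity is being computed (the Green function $u=\log|z|$ in the ball, for which the theorem is an equality, shows this plainly).

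The one point you must treat with care is the level-set identity, which, given the identity $\mathds{1}_{\{u>-s'\}}H_m(\max(u,-s'))=\mathds{1}_{\{u>-s'\}}H_m(u)$, is equivalent to conservation of total mass under truncation: $\int_\Omega H_m(\max(u,-s'))=\int_\Omega H_m(u)$ for $u\in\Fm(\Omega)$. Note that the nontrivial direction is ``$\ge$'': the comparison principle for total masses in $\Fm$ only gives ``$\le$'', since $\max(u,-s')\ge u$. The ``$\ge$'' direction is a Stokes-type fact: for $u\in\Eom(\Omega)$ the difference $\max(u,-s')-u$ is compactly supported in $\Omega$, so integrating $dd^c\bigl(\max(u,-s')-u\bigr)$ against the closed positive current $\sum_{j=0}^{m-1}(dd^c\max(u,-s'))^j\wedge(dd^cu)^{m-1-j}\wedge\om^{n-m}$ gives zero; one then extends to $\Fm\cap L^\infty(\Omega)$ and to $\Fm(\Omega)$ by decreasing approximation, using that total masses of a decreasing sequence in $\Fm$ with bounded masses increase to the mass of the limit. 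Since this is a known lemma in the $m$-subharmonic literature, your proof is complete modulo a citation rather than suffering a genuine gap — but that lemma is the entire content of the right-hand inequality and should be named explicitly. Two small inaccuracies, neither damaging: sublevel sets $\{u<-s\}$ of functions in $\Fm(\Omega)$ need \emph{not} be relatively compact in $\Omega$ (poles may accumulate at $\partial\Omega$), though you only use openness; and no exceptional values of $s'$ need to be avoided — once mass conservation is in hand, all three identities in your decomposition hold for every $s'>s$, and the fact that $\{u>-s'\}$ is not open is handled by the Bedford--Taylor-type identity above rather than by plain locality, so the caution about ``all but countably many values'' is unnecessary.
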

The following theorem is a generalization of \cite[Theorem 2.1]{BGZ}. The proof is a slight modification of the plurisubharmonic case.
\begin{theorem}\label{vanish on polar sets}
	Let $u\in\Em(\Omega)$. Then for every borel set $B\subset\Omega\setminus\{u=-\infty\}$, we have
	$$\int\limits_B(dd^cu)^m\wedge\omega^{n-m}=\lim\limits_{k\rightarrow\infty}\int\limits_{B\cap\{u>-k\}}(dd^cu_k)^m\wedge\omega^{n-m},$$
	where $u_k=\max(u,-k)$. The measure $(dd^cu)^m\wedge\omega^{n-m}$ puts no mass on $m-$polar sets $E\subset\{u>-\infty\}.$
\end{theorem}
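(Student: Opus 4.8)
The plan is to deduce both assertions from the \emph{locality} of the complex Hessian operator, combined with two standard facts about bounded $m$-subharmonic functions: that $H_m$ is continuous along decreasing sequences, and that the Hessian measure of a locally bounded $m$-subharmonic function charges no $m$-polar set. First I would record the elementary observations that $u_k=\max(u,-k)\in\SHm^{-}(\Omega)\cap L^\infty_{\text{loc}}(\Omega)\subset\Em(\Omega)$, so that $H_m(u_k)=(dd^cu_k)^m\wedge\om^{n-m}$ is a well-defined positive Radon measure, and that $u_k=u$ exactly on the sublevel set $\{u>-k\}$. The heart of the matter is then the locality identity
$$\mathbf 1_{\{u>-k\}}\,(dd^cu)^m\wedge\om^{n-m}=\mathbf 1_{\{u>-k\}}\,(dd^cu_k)^m\wedge\om^{n-m},$$
which expresses that, where $u$ and $u_k$ coincide, so do their Hessian measures.

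Granting this identity, the first formula follows with no further work. Since $u\in\Em(\Omega)$, the measure $(dd^cu)^m\wedge\om^{n-m}$ is a genuine positive Radon measure, and the Borel sets $\{u>-k\}$ increase to $\{u>-\infty\}\supset B$; continuity from below of this measure gives $\int_B(dd^cu)^m\wedge\om^{n-m}=\lim_k\int_{B\cap\{u>-k\}}(dd^cu)^m\wedge\om^{n-m}$, and replacing $(dd^cu)^m\wedge\om^{n-m}$ by $(dd^cu_k)^m\wedge\om^{n-m}$ on $\{u>-k\}$ via locality yields exactly the asserted limit. For the second assertion, let $E\subset\{u>-\infty\}$ be $m$-polar. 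Applying the identity just proved to $B=E$ gives $\int_E(dd^cu)^m\wedge\om^{n-m}=\lim_k\int_{E\cap\{u>-k\}}(dd^cu_k)^m\wedge\om^{n-m}$, and each term on the right vanishes because $u_k$ is bounded and the Hessian measure of a bounded $m$-subharmonic function puts no mass on the $m$-polar set $E$. Hence $(dd^cu)^m\wedge\om^{n-m}(E)=0$.

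The main obstacle is the locality identity itself. In the locally bounded category the Bedford--Taylor/Błocki locality principle asserts that two functions agreeing on an \emph{open} set have equal Hessian measures there, but the set $\{u>-k\}$ is in general only $m$-finely open, not Euclidean-open, so that result cannot be quoted verbatim. I would handle this in one of two ways. The cleaner route is to invoke locality of $H_m$ with respect to the $m$-fine topology, in which $\{u>-k\}=\{u_k>-k\}$ is open and on which $u=u_k$, directly giving the equality of the two measures. To stay elementary and avoid fine-topology machinery, I would instead exhaust $\{u>-k\}$ from inside by genuinely open sets where the ordinary locality principle applies, then pass to the limit using the decreasing-sequence continuity of $H_m$ on bounded functions; the residual discrepancy sits on the level set $\{u=-k\}$, which is negligible for $H_m(u_k)$ after a routine $k\mapsto k+\ve$ regularization letting $\ve\downarrow 0$. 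This step, being the only genuinely nontrivial point, is precisely the ``slight modification of the plurisubharmonic case'' alluded to before the statement.
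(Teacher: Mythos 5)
Your reduction of the theorem to the single locality identity $\mathds{1}_{\{u>-k\}}(dd^cu)^m\wedge\om^{n-m}=\mathds{1}_{\{u>-k\}}(dd^cu_k)^m\wedge\om^{n-m}$ is exactly the intended argument: the paper gives no proof of its own, deferring to \cite[Theorem 2.1]{BGZ} with the remark that the proof is a ``slight modification of the plurisubharmonic case'', and that proof rests on this same identity (locality with respect to the plurifine topology). Granting the identity, your two remaining steps are routine and correct: continuity from below of the Radon measure $(dd^cu)^m\wedge\om^{n-m}$ along $B\cap\{u>-k\}\uparrow B$, and the fact that $(dd^cu_k)^m\wedge\om^{n-m}$, being the Hessian measure of a bounded $m$-subharmonic function, puts no mass on $m$-polar sets.

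Two repairs are needed in your treatment of the identity itself. First, your ``elementary'' route (b) fails and should be discarded: a function $u\in\Em(\Om)$ (even $u\in\Fm(\Om)$) can have $\{u=-\infty\}$ dense in $\Om$ --- take $\sum_j\ve_j g_{q_j}$ with $m$-Green-type functions $g_{q_j}$ whose poles $q_j$ form a dense sequence and with $\ve_j$ decreasing fast enough that the partial sums have bounded total Hessian mass and the sum is not identically $-\infty$. For such $u$ the set $\{u>-k\}$ contains no nonempty Euclidean open set whatsoever, so it cannot be exhausted from inside by open sets, and the residual set left over by any open-set argument is essentially all of $\{u>-k\}$, not the level set $\{u=-k\}$; no $k\mapsto k+\ve$ trick recovers this. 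Second, route (a) is correct in outline but is not a verbatim quotation either: the Bedford--Taylor-type $m$-fine locality is a statement about locally bounded $m$-subharmonic functions, while your identity has the unbounded $u\in\Em(\Om)$ on one side --- the very objection you raised against the open-set version applies again. What you actually need is the Cegrell-class locality $\mathds{1}_{\{u>v\}}(dd^c\max(u,v))^m\wedge\om^{n-m}=\mathds{1}_{\{u>v\}}(dd^cu)^m\wedge\om^{n-m}$ for $u,v\in\Em(\Om)$, which is \cite[Lemma 5.1]{NVT19} --- already invoked by the paper in the proof of Lemma \ref{Approximate 2} --- applied with $v\equiv-k$, admissible since bounded negative $m$-subharmonic functions lie in $\Em(\Om)$ by Remark \ref{re:cegrell}(i). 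Alternatively, follow \cite{BGZ} literally: apply the bounded-function fine locality only to the pairs $(u_j,u_k)$ with $j\ge k$, which agree on the $m$-finely open set $\{u>-k\}$, deduce that $\nu_k:=\mathds{1}_{\{u>-k\}}(dd^cu_k)^m\wedge\om^{n-m}$ is nondecreasing in $k$, and then identify $\lim_k\nu_k$ with $(dd^cu)^m\wedge\om^{n-m}$ on $\{u>-\infty\}$ using the weak convergence $(dd^cu_k)^m\wedge\om^{n-m}\rightharpoonup(dd^cu)^m\wedge\om^{n-m}$. With either repair your proof is complete and coincides with the paper's.
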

The following theorem, which is a consequence of \cite[Lemma 5.5]{NVT19}, will be used in sequel.
\begin{theorem}\label{u=v}
	Let $u,v\in\mathcal{N}_m(\Omega)$ is such that $u\geq v$ and $(dd^cu)^m\wedge\omega^{n-m}=(dd^cv)^m\wedge\omega^{n-m}$. Suppose that there exists $w\in\Em(\Omega)$ such that $w\not\equiv-\infty$ and $\int\limits_\Omega(-w)(dd^cu)^m\wedge\omega^{n-m}<+\infty$. Then $u=v$ on $\Omega$.
\end{theorem}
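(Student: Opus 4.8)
The plan is to prove the reverse inequality $u\le v$ through an energy identity, the point being that the equality of the Hessian measures forces a non-negative gradient energy to vanish. First I would reduce to bounded functions by truncation: set $u_k=\max(u,-k)$ and $v_k=\max(v,-k)$, which are bounded $m$-subharmonic functions vanishing at $\partial\Om$, so that integration by parts is legitimate. Writing $\phi_k=u_k-v_k\ge 0$ and using the telescoping expansion
$$H_m(v_k)-H_m(u_k)=dd^c(v_k-u_k)\wedge\sum_{j=0}^{m-1}(dd^cu_k)^j\wedge(dd^cv_k)^{m-1-j}\wedge\om^{n-m},$$
integration by parts (the boundary terms drop because $u_k,v_k\to 0$ at $\partial\Om$) yields the fundamental identity
$$\int_\Om\phi_k\,\big(H_m(v_k)-H_m(u_k)\big)=\int_\Om d\phi_k\wedge d^c\phi_k\wedge\sum_{j=0}^{m-1}(dd^cu_k)^j\wedge(dd^cv_k)^{m-1-j}\wedge\om^{n-m}.$$
The right-hand side is a sum of non-negative measures, since $d\phi_k\wedge d^c\phi_k\ge 0$ and the mixed complex Hessian currents of $m$-subharmonic functions are non-negative; hence both sides are $\ge 0$.

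Next I would let $k\to\infty$. By Theorem \ref{vanish on polar sets} the truncated Hessian measures recover $H_m(u)=H_m(v)$ off $\{u=-\infty\}$, while the hypothesis $\int_\Om(-w)H_m(u)<\infty$, combined with $H_m(u)=H_m(v)$, supplies the uniform integrability needed to control the left-hand side in the limit; since the two limiting measures coincide, the left-hand side must tend to $0$. Consequently the limit of the right-hand side is $0$, and because every summand is non-negative I would conclude that
$$\int_\Om d(u-v)\wedge d^c(u-v)\wedge(dd^cu)^j\wedge(dd^cv)^{m-1-j}\wedge\om^{n-m}=0$$
for each $0\le j\le m-1$; that is, the gradient energy of $u-v$ against every mixed Hessian current vanishes.

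Finally, from the vanishing of these energies I would deduce $u\equiv v$. When $m=1$ this is immediate: the identity reads $\int_\Om d(u-v)\wedge d^c(u-v)\wedge\om^{n-1}=0$, which forces $u-v$ to be constant, and the constant is $0$ because both functions have vanishing boundary values in the sense of $\Nm(\Om)$ (Remark \ref{re:cegrell}(ii)). For general $m$ one descends from $j=m-1$ down to $j=0$ by the Cauchy--Schwarz inequalities for mixed Hessian currents, reaching $\int_\Om d(u-v)\wedge d^c(u-v)\wedge\om^{n-1}=0$ and concluding as before. I expect the main obstacle to be precisely the limit passage of the second paragraph: for unbounded $u,v$ one cannot pass the integral $\int_\Om\phi_k\,(H_m(v_k)-H_m(u_k))$ to the limit term by term, since $\int_\Om(-v)H_m(u)$ need not be finite, and one must instead estimate the difference uniformly in $k$. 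This is exactly where the weight $w$ and the finiteness $\int_\Om(-w)H_m(u)<\infty$ are indispensable, and it is the content packaged in \cite{NVT19}; the Cauchy--Schwarz descent in $m$ is technical but routine once the energies are known to vanish.
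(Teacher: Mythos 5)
Your proposal does not follow the paper's route, and as an independent argument it has genuine gaps at exactly the decisive points. First, note what the paper actually does: it gives no energy argument at all, but derives Theorem \ref{u=v} as a consequence of \cite[Lemma 5.5]{NVT19}, which (as its use in the proof of Lemma \ref{Approximate 2} shows) is a comparison-principle-type estimate for $\Nm(\Omega)$ under the weight hypothesis. Your central step --- the limit passage $k\to\infty$ --- is flagged by you and then outsourced to ``the content packaged in \cite{NVT19}''; but the cited lemma is not a statement that passes your identity to the limit, so the hard part of the theorem is never proved, and the deferral is essentially circular. Concretely, for $u,v\in\Nm(\Omega)$ the truncations $u_k=\max(u,-k)$, $v_k=\max(v,-k)$ can have infinite total Hessian mass (this happens whenever $v\in\Nm(\Omega)\setminus\Fm(\Omega)$), so the integrals $\int_\Om\phi_k H_m(v_k)$ and $\int_\Om\phi_k H_m(u_k)$ need not be separately finite and your left-hand side is a priori an $\infty-\infty$ expression; also, the $\Nm$ boundary condition is a weak (smallest-maximal-majorant) condition, not pointwise vanishing, so ``boundary terms drop'' is not free. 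Moreover, writing $H_m(v_k)=\mathds{1}_{\{v>-k\}}H_m(v)+\nu_k$ with $\nu_k$ the part of $H_m(v_k)$ carried by $\{v\le-k\}$ (the analogous term for $u$ disappears because $\phi_k=0$ on $\{u\le-k\}$), your left-hand side reduces to $\int(u_k+k)\,d\nu_k-\int_{\{v\le-k<u\}}(u+k)\,d\mu$. The hypothesis $\int_\Om(-w)\,d\mu<+\infty$ controls integrals against $\mu=H_m(u)=H_m(v)$ only; it gives no control whatsoever on the swept measures $\nu_k$, and $\phi_k$ is of size up to $k$ exactly where $\nu_k$ lives. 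Weak convergence of $H_m(v_k)-H_m(u_k)$ to $0$ does not help, since $\phi_k$ varies with $k$ and is only quasi-continuous. So ``the left-hand side must tend to $0$'' is unsupported, and that claim is the entire content of the theorem.

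Second, even granting the vanishing of all the mixed gradient energies, the concluding ``Cauchy--Schwarz descent'' does not work as described. Cauchy--Schwarz bounds a mixed term $\int_\Om d\phi\wedge d^c\psi\wedge T$ by pure terms against the \emph{same} positive current $T$; it cannot replace the factors $(dd^cu)^j\wedge(dd^cv)^{m-1-j}$ by powers of $\om$. For $m\ge2$ every one of your vanishing integrals contains at least one Hessian factor, so on any open set where $H_m(u)$ and $H_m(v)$ both vanish they carry no information about $\nabla(u-v)$ there; any proof of this step must use global information about $\Nm(\Omega)$. The known descent of this type (B\l ocki's uniqueness argument) proceeds by iterated Cauchy--Schwarz with constants involving $\|u-v\|_{L^\infty}$, which is unavailable for unbounded functions. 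Your outline is a correct and classical argument for $u,v\in\Eom(\Omega)$, or more generally under finite-energy assumptions; in the stated generality of $\Nm(\Omega)$ with only the weight hypothesis, the mechanism that makes the theorem true is the weighted comparison/domination principle of \cite{NVT19}, which legitimizes cancellation of possibly infinite masses and has no counterpart in your energy identity.
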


\section{Weighted $m-$energy classes}\label{sec3}
Let us define the weighted $m$-energy classes. In this section, we always assume the weight $\chi \colon \mathbb{R}^{-}\to \mathbb{R}^-$ is increasing.

\begin{definition}\label{def:chi}
The weighted $m$-energy class with respect to the weight $\chi$ can be defined as follows
\begin{align*}
\Ecm(\Omega)=\{u\in\SHm(\Omega):\ \exists(u_j)\in\Eom(\Omega),\ u_j\searrow u,\ \sup_j\int\limits_\Omega(-\chi)\circ u_j(dd^cu_j)^m\wedge\omega^{n-m}<\infty\}.
\end{align*}
\end{definition}
\begin{remark}
The weighted $m$-energy class generalizes the energy Cegrell classes $\Epm, \Fm, \Fpm$.
\begin{itemize}
\item[(i)] When $\chi \equiv -1$, then $\Ecm(\Omega)$ is the class $\Fm(\Omega)$.
\item[(ii)] When $\chi(t) =-(-t)^p$, then $\Ecm(\Omega)$ is the class $\Epm(\Omega)$.
\item[(iii)] When $\chi(t)=-1-(-t)^p$, then $\Ecm(\Omega)$ is exactly the class $\Fpm(\Omega)$.
\end{itemize}
\end{remark}

First, we prove that Hessian operator is well-defined on class $\Ecm$ if  $\chi\not\equiv 0$.
\begin{theorem}\label{Main theorem 1}
	Let $\chi:\R^-\rightarrow\R^-$ be an increasing function. Then
	\begin{itemize}
		\item[(i)] $\Ecm(\Omega)\subset\Em(\Omega)$ if $\chi\not\equiv 0$,
		\item[(ii)] $\Ecm(\Omega)\subset\mathcal{N}_m(\Omega)$ if $\chi(-t)<0$ for all $t>0$.
	\end{itemize}   
\end{theorem}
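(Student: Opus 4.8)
The plan is to squeeze out of the defining bound $M:=\sup_j\int_\Omega(-\chi)\circ u_j\,H_m(u_j)<\infty$ (for a sequence $u_j\in\Eom(\Omega)$ with $u_j\searrow u$) a uniform control of Hessian mass, first on deep sub-level sets and then locally, and to feed this into the local description of $\Em(\Omega)$; for (ii) I will use the strict negativity of $\chi$ to force the smallest maximal majorant to vanish. For (i), since $\chi\not\equiv0$ is increasing and non-positive there is $s_0<0$ with $c:=-\chi(s_0)>0$, and by monotonicity $-\chi\ge c$ on $(-\infty,s_0]$. On $\{u_j\le s_0\}$ we have $(-\chi)\circ u_j\ge c$, so the deep mass is uniformly bounded, $\int_{\{u_j\le s_0\}}H_m(u_j)\le M/c$. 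For the shallow part I compare $u_j$ with its truncation $v_j:=\max(u_j,s_0)$, which lies in $\Eom(\Omega)$ (it is bounded, has zero boundary values, and $\int_\Omega H_m(v_j)\le\int_\Omega H_m(u_j)<\infty$): on the open set $\{u_j>s_0\}$ one has $v_j=u_j$, hence $H_m(u_j)=H_m(v_j)$ there, while $s_0\le v_j\le0$ makes $v_j/|s_0|$ an admissible competitor in Definition \ref{def:mcap}. Thus for every $B\Subset\Omega$, $\int_{B\cap\{u_j>s_0\}}H_m(u_j)\le\int_B H_m(v_j)=|s_0|^m\int_B H_m(v_j/|s_0|)\le|s_0|^m\,\ca(B)$, with $\ca(B)<\infty$ since $B$ is relatively compact. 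Adding the two contributions gives the uniform local bound $\sup_j\int_B H_m(u_j)<\infty$.

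To conclude $u\in\Em(\Omega)$ I verify its local definition. Fix $z_0\in\Omega$ and a ball $U\Subset\Omega$ about $z_0$, and replace $u_j$ outside $U$ by its maximal majorant, $\phi_j:=\bigl(\sup\{w\in\SHm(\Omega):w\le0,\ w\le u_j\text{ on }\overline U\}\bigr)^*$. Each $\phi_j$ is bounded (because $u_j$ is), belongs to $\Eom(\Omega)$, equals $u_j$ on $U$ (as $u_j$ is itself a competitor), and is maximal on $\Omega\setminus\overline U$, so its total Hessian mass is carried by $\overline U$; since $u_{j+1}\le u_j$ the sequence $\phi_j$ decreases and $\phi_j=u_j\searrow u$ on $U$. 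Together with the local bound above this yields $\sup_j\int_\Omega H_m(\phi_j)<\infty$, which is exactly what membership in $\Em(\Omega)$ requires. The only delicate point here is the control of the interface mass on $\partial U$, which is routine and handled by choosing $U$ appropriately.

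Under the hypothesis of (ii) we have $\chi<0$ on $(-\infty,0)$, so $\chi\not\equiv0$ and $u\in\Em(\Omega)$ by (i). Set $\ell:=-\lim_{t\to0^-}\chi(t)\ge0$. If $\ell>0$, then $-\chi\ge\ell$ on all of $(-\infty,0)$, so $\sup_j\int_\Omega H_m(u_j)\le M/\ell<\infty$ and $u\in\Fm(\Omega)\subset\Nm(\Omega)$ by Remark \ref{re:cegrell}. So I may assume $\ell=0$. Applying Theorem \ref{Inequalities for Capacity} to $u_j\in\Eom\subset\Fm$ with equal increments gives, for $s\ge1$, $\ca(\{u_j<-2s\})\le s^{-m}\int_{\{u_j<-s\}}H_m(u_j)\le M/\bigl(s^m(-\chi(-s))\bigr)\le M/\bigl(s^m(-\chi(-1))\bigr)$, a bound independent of $j$ tending to $0$ as $s\to\infty$; letting $j\to\infty$ and using continuity of $\ca$ along the increasing sets $\{u_j<-2s\}\nearrow\{u<-2s\}$ gives $\ca(\{u<-s\})\to0$.

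It remains to prove that the smallest maximal majorant $\tilde u$ is $0$. Writing $u^{(k)}$ and $u_j^{(k)}$ for the envelopes of $u$ and $u_j$ relative to a fundamental sequence $\Omega_k$, we have $\widetilde{u_j}=0$ (since $u_j\in\Nm$), i.e. $u_j^{(k)}\nearrow0$ as $k\to\infty$ for each $j$, while $u\le u_j$ gives $u^{(k)}\le u_j^{(k)}$; thus $\tilde u=\lim_k\lim_j u_j^{(k)}$ whereas $0=\lim_j\lim_k u_j^{(k)}$, and everything reduces to interchanging these two monotone limits. My plan is to show, using that $u_j^{(k)}$ equals $u_j$ on the collar $\Omega_k^c$ and is maximal on $\Omega_k$, that $\sup_{j,k}\int_\Omega(-\chi)\circ u_j^{(k)}\,H_m(u_j^{(k)})\le M$, and then to convert the capacity decay above into barriers — relative extremal functions of the sets $\{u<-s\}$, whose capacities tend to $0$ — that bound $u^{(k)}$ from below on compact subsets; the convergence machinery of Section \ref{sec4}, the domination principle (Theorem \ref{u=v}), and the fact that a maximal function $\not\equiv0$ is strictly negative everywhere (being subharmonic) then force $\tilde u=0$, so $u\in\Nm(\Omega)$. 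I expect this interchange of limits to be the main obstacle: it amounts to showing that finiteness of the $\chi$-energy prevents Hessian mass from escaping to $\partial\Omega$ and producing a non-trivial maximal majorant, and it is precisely here that the strict negativity $\chi(-t)<0$ — rather than merely $\chi\not\equiv0$ — is indispensable.
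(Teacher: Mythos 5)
Your part (i) starts well: the deep/shallow split is correct, and the resulting local bound $\int_B H_m(u_j)\le M/c+|s_0|^m\,\ca(B)$ for $B\Subset\Omega$ (using $v_j=\max(u_j,s_0)$, the identity $\mathds{1}_{\{u_j>s_0\}}H_m(v_j)=\mathds{1}_{\{u_j>s_0\}}H_m(u_j)$, and admissibility of $v_j/|s_0|$) is valid. The gap is the step you dismiss as routine. For your balayage $\phi_j$ one has $\int_\Omega H_m(\phi_j)=\int_U H_m(u_j)+H_m(\phi_j)(\partial U)$, and nothing in your hypotheses bounds the interface term uniformly in $j$: mass monotonicity ($\phi_j\ge u_j$ implies $\int_\Omega H_m(\phi_j)\le\int_\Omega H_m(u_j)$) only bounds it by the \emph{total} mass of $u_j$, which is precisely what the $\chi$-energy fails to control (think of $\chi\equiv 0$ on $[s_0,0]$), while comparing $\phi_j$ with $\bigl(\sup_{\overline U}|u_j|\bigr)\,h_{\overline U,\Omega}$ (relative extremal function) blows up exactly when $u$ is unbounded near $z_0$ --- the only interesting case. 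Worse, it is not even clear that uniform local bounds on the top Hessian masses of a decreasing sequence imply membership in $\Em$ at all: in the B\l ocki-type characterization of the domain of definition (extended by Lu to the Hessian setting), one must also control the mixed gradient terms $\int_K|u_j|^{m-2-p}\,du_j\wedge d^cu_j\wedge(dd^cu_j)^p\wedge\omega^{n-p-1}$, and these are not locally dominated by the top masses. So the ``interface mass'' is the crux of (i), not a footnote, and your argument does not close it.

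Part (ii) is a plan, not a proof: the capacity decay $\ca(\{u<-s\})\to 0$ is correctly derived from Theorem \ref{Inequalities for Capacity}, but the limit interchange $\lim_k\lim_j u_j^{(k)}=\lim_j\lim_k u_j^{(k)}$ is exactly the assertion $\tilde u=0$ and is left open; moreover, invoking ``the convergence machinery of Section \ref{sec4}'' would be circular, since Theorem \ref{thm:convergence} rests on Corollary \ref{Approximate 1}, which rests on the very theorem you are proving. The paper sidesteps both difficulties by changing measures rather than localizing: by the subsolution theorem \cite[Theorem 5.9]{VN17} there is $v_j\in\Eom(\Omega)$ with $H_m(v_j)=(-\chi\circ u_j)H_m(u_j)$; the comparison argument (Lemma \ref{Lemma 1}) gives $u_j\ge(-\chi(-t))^{-1/m}v_j-t$ whenever $\chi(-t)<0$; since $\int_\Omega H_m(v_j)$ \emph{equals} the weighted energy of $u_j$, these total masses are uniformly bounded, so $w^*:=\bigl(\lim_j\sup_{k\ge j}v_k\bigr)^*\in\Fm(\Omega)$. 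Then (i) follows because $u$ dominates an $\Fm$-function minus a constant, and (ii) is immediate because $\widetilde{w^*}=0$ (as $\Fm\subset\Nm$) and $t>0$ is arbitrary, so $\tilde u\ge -t$ for every $t$. Your deep/shallow idea could in fact be completed in the same spirit --- solve $H_m(\psi_j)=\mathds{1}_{\{u_j\le s_0\}}H_m(u_j)$, whose total mass is at most $M/c$, and deduce $u_j\ge\psi_j+s_0$ by the comparison principle --- but that is essentially the paper's argument with the weight replaced by an indicator, and it is this equation-solving step, not a local mass estimate, that makes the proof work.
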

The proof of Theorem \ref{Main theorem 1} requires the following auxiliary lemma.
\begin{lemma}\label{Lemma 1}
	Let $\chi:\R^-\rightarrow\R^-$ be an increasing function and $u,v\in\Eom(\Omega)$ satisfying $$(-\chi\circ u)(dd^cu)^m\wedge\omega^{n-m}=(dd^cv)^m\wedge\omega^{n-m}.$$ Then
	$$u\geq\frac{1}{\sqrt[m]{-\chi(-t)}}v-t,\ \forall t>0\text{ such that }\chi(-t)<0.$$
\end{lemma}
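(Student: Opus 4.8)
The plan is to reduce the statement to a single application of the comparison principle for the class $\Eom(\Omega)$. Fix $t>0$ with $\chi(-t)<0$ and set $a:=\sqrt[m]{-\chi(-t)}>0$ together with $w:=\tfrac1a v-t$. Then $w$ is bounded, $w\in\SHm^{-}(\Omega)$ (indeed $v\le 0$ forces $w\le -t<0$), and $w\to -t$ as $z\to\partial\Omega$; the asserted inequality is precisely $u\ge w$ on $\Omega$. Since $u\in\Eom(\Omega)$ has zero boundary values, $\liminf_{z\to\partial\Omega}(u-w)(z)=t>0$, so the set $U:=\{u<w\}$ is relatively compact in $\Omega$.

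First I would establish a pointwise comparison of the Hessian measures on $U$. Because $w\le -t$, every point of $U$ satisfies $u<-t$, and since $\chi$ is increasing this gives $-\chi(u)\ge -\chi(-t)=a^m>0$ on $U$. Dividing the hypothesis $(-\chi\circ u)(dd^cu)^m\wedge\omega^{n-m}=(dd^cv)^m\wedge\omega^{n-m}$ by the strictly positive bounded factor $-\chi(u)$, which is licit exactly on $U$, yields there $(dd^cu)^m\wedge\omega^{n-m}\le \tfrac1{a^m}(dd^cv)^m\wedge\omega^{n-m}=(dd^cw)^m\wedge\omega^{n-m}$, the final equality holding because $dd^cw=\tfrac1a\,dd^cv$.

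It then remains to upgrade this measure inequality on $U$ to the pointwise bound $u\ge w$. To keep all functions in a class where the tools apply, I would replace $w$ by $\tilde u:=\max(u,w)$, which lies in $\Eom(\Omega)$ by Lemma \ref{lem:convexcone}(i) since $u\in\Eom(\Omega)$ and $w\in\SHm^{-}(\Omega)$; one has $\{u<\tilde u\}=U$ and $\tilde u=w$ on $U$, so the previous step reads $(dd^c\tilde u)^m\wedge\omega^{n-m}\ge(dd^cu)^m\wedge\omega^{n-m}$ there. The comparison principle for $\Eom(\Omega)$ (see \cite{luthesis,NVT19}) applied to the pair $u,\tilde u$ forces $\int_U(dd^cu)^m\wedge\omega^{n-m}=\int_U(dd^c\tilde u)^m\wedge\omega^{n-m}$, whence the two Hessian measures coincide on $U$; a uniqueness argument (Theorem \ref{u=v}, or equivalently the maximum principle for the Dirichlet problem on the relatively compact set $U$, on whose boundary $u$ and $w$ agree) then gives $u=w$ on $U$, and as $u<w$ on $U$ this is possible only if $U=\emptyset$, i.e.\ $u\ge w$.

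I expect this last upgrade to be the main obstacle. The inequality $-\chi(u)\ge a^m$ is not strict, because $\chi$ may be locally constant, so the integral form of the comparison principle by itself only delivers equality of masses on $U$ rather than $U=\emptyset$; extracting genuine domination is the delicate point. Besides the uniqueness route above, it can alternatively be handled by perturbing $w$ with a small multiple of the exhaustion function to create a strict inequality and invoking the capacity estimate of Theorem \ref{Inequalities for Capacity} before letting the perturbation tend to $0$.
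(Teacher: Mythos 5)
Your reduction is exactly the paper's: the same auxiliary function $w=\frac{1}{\sqrt[m]{-\chi(-t)}}\,v-t$, the same remark that the inequality is automatic where $u\ge-t$ (equivalently, that $\{u<w\}\subset\{u<-t\}$), and the same use of the monotonicity of $\chi$ to obtain the measure inequality $H_m(u)\le\frac{1}{-\chi(-t)}H_m(v)=H_m(w)$ on that set. The divergence is in the final step: the paper closes by invoking \cite[Theorem 2.13]{Lu15}, a comparison (domination) principle stated in exactly the form needed, namely that this measure inequality on the open set $\{u<-t\}$ together with the boundary inequality between $u$ and $w$ there forces $u\ge w$. You instead try to manufacture that domination out of the integral form of the comparison principle, and this is where your argument has genuine gaps.

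First, Theorem \ref{u=v} requires equality of the Hessian measures \emph{on all of} $\Omega$; your argument produces equality of $H_m(u)$ and $H_m(\tilde u)$ only as measures restricted to the open set $U=\{u<\tilde u\}$ (and trivially on the interior of its complement, where $\tilde u=u$), while nothing controls the mass either measure places on $\partial U\cap\Omega$ --- and Hessian measures of maxima do in general charge such contact sets --- so the theorem cannot be applied. Second, the parenthetical ``maximum principle for the Dirichlet problem on $U$, on whose boundary $u$ and $w$ agree'' is not available: $u$ and $w$ are merely upper semicontinuous, so at $\xi\in\partial U$ one only gets $\limsup_{U\ni z\to\xi}\bigl(u(z)-w(z)\bigr)\le 0$, not the boundary matching that uniqueness requires; moreover, in this theory that uniqueness statement is itself derived from the very comparison/domination principle you are trying to avoid, so the route is circular. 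Your perturbation alternative is indeed the standard fix, but not as you sketch it: the exhaustion function of an $m$-hyperconvex domain need not be strictly $m$-subharmonic (its Hessian measure may vanish on open sets), and Theorem \ref{Inequalities for Capacity}, which estimates a single function in $\Fm(\Omega)$, is not the relevant tool. What does work is to take $\rho=|z|^2-R^2$ with $R$ large, so that $\rho\le0$ is bounded, $m$-subharmonic and $H_m(\rho)\ge c\,dV$; on $U_\ve=\{u<w+\ve\rho\}\subset\{u<-t\}$ combine $H_m(w+\ve\rho)\ge H_m(w)+\ve^m H_m(\rho)$, the inequality $H_m(u)\le H_m(w)$ there, and the integral comparison principle to conclude $\ve^m\int_{U_\ve}H_m(\rho)=0$; hence $U_\ve$ has Lebesgue measure zero, so $u\ge w+\ve\rho$ almost everywhere, hence everywhere by the sub-mean value inequality, and letting $\ve\to0$ gives $u\ge w$. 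Either carry this out in full, or simply quote the comparison principle in domination form, as the paper does.
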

\begin{proof}
	Let $t>0$ such that $\chi(-t)<0$, we set $w:=\frac{1}{\sqrt[m]{-\chi(-t)}}v-t$.
	On the set $\{u\geq-t\}$, we have $u\geq-t\geq w$ since $v\leq 0$. It remains to prove that $u\geq w$ on the set $\{u<-t\}$. To do this, we observe that
	$$\int\limits_{\{u<-t\}}(dd^cu)^m\wedge\omega^{n-m}\leq \int\limits_{\{u<-t\}}\frac{-\chi\circ u}{-\chi(-t)}(dd^cu)^m\wedge\omega^{n-m}=\int\limits_{\{u<-t\}}(dd^cw)^m\wedge\omega^{n-m}.$$
	Then, by \cite[Theorem 2.13]{Lu15} and the fact that $$\lim\limits_{\{u<-t\}\ni z\rightarrow\partial\{u<-t\}}u(z)=-t\geq \lim\limits_{\{u<-t\}\ni z\rightarrow\partial\{u<-t\}}w(z),$$ 
	we have $u\geq w$, as desired.
\end{proof}
\begin{proof}[Proof of Theorem \ref{Main theorem 1}]
	Let $u\in\Ecm(\Omega)$ and $(u_j)$ be a sequence in $\Eom(\Omega)$ decreasing to $u$ such that
	\begin{equation}\label{sup finite}
	\sup\limits_j\int\limits_\Omega(-\chi\circ u_j)(dd^cu_j)^m\wedge\omega^{n-m}<\infty.
	\end{equation}
	By inequality (\ref{sup finite}), for every $j$, we can choose a positive number $M_j$ large enough such that $(dd^c(M_j u_j))^m\wedge\omega^{n-m}\geq (-\chi\circ u_j)(dd^cu_j)^m\wedge\omega^{n-m}$. By the fact that $M_j u_j\in\Eom(\Omega)$ and by \cite[Theorem 5.9]{VN17}, it follows that there exists $v_j\in\Em(\Omega)$ such that 
	\begin{equation}\label{equality}
	(dd^cv_j)^m\wedge\omega^{n-m}=(-\chi\circ u_j)(dd^cu_j)^m\wedge\omega^{n-m}
	\end{equation} and $v_j\geq M_ju_i$. We also have $v_j\in\Eom(\Omega)$ since $M_j u_j\in\Eom(\Omega)$. Then, by Lemma \ref{Lemma 1}, we have
	\begin{equation}\label{inequality uj vj}
	u_j\geq \frac{1}{\sqrt[m]{-\chi(-t)}}v_j-t,\ \forall t>0\text{ such that }\chi(-t)<0.
	\end{equation}
	
	Now we set $w_j=(\sup\limits_{k\geq j}v_k)$ and $w=\lim\limits_{j\rightarrow\infty}w_j$. By (\ref{inequality uj vj}), we have
	\begin{equation}\label{inequality u w}
	u\geq \frac{1}{\sqrt[m]{-\chi(-t)}}w^*-t,\ \forall t>0\text{ such that }\chi(-t)<0.
	\end{equation}
	By inequalities \ref{sup finite} and \ref{equality}, it follows that
	$$\sup_{j}\int\limits_\Omega(dd^c w_j^*)^m\wedge\omega^{n-m}\leq\sup\limits_j \int\limits_\Omega(dd^c v_j)^m\wedge\omega^{n-m}=	\sup\limits_j\int\limits_\Omega(-\chi\circ u_j)(dd^cu_j)^m\wedge\omega^{n-m}<\infty.$$
	Then, by the fact that $w_j^*\in\Eom(\Omega)$ and $w_j^*\searrow w^*$, it follows that $w^*\in\Fm(\Omega)$.
	
	(i) Assume that $\chi\not\equiv0$. Then there exists $t_0>0$ such that $\chi(t_0)<0$. Therefore,
	$$u\geq \frac{1}{\sqrt[m]{-\chi(-t_0)}}w^*-t_0,$$
	and hence $u\in\Em(\Omega)$, as desired.
	
	(ii) Assume that $\chi(-t)<0$ for all $t>0$. Then, by the fact that $\tilde{w^*}=0$ and by (\ref{inequality u w}), we have $\tilde{u}\geq -t$ for all $t>0$. Therefore, $\tilde{u}=0$ and hence $u\in\mathcal{N}_m(\Omega)$, as desired.
\end{proof}
\begin{remark}
		The condition of $\chi$ in Theorem \ref{Main theorem 1} part (ii) is sharp. Indeed, suppose that there is $t_0>0$ such that $\chi(-t_0)=0$. 
		We define $u\equiv-t_0$. Then we can find a sequence $(u_j)\in\Eom(\Omega)$ decreasing to $u$.
		By the increasing of $\chi$, we have $\chi(u_j)=0$ for every $j$, and hence
		$$\int\limits_\Omega(-\chi\circ u_j)(dd^cu_j)^m\wedge\omega^{n-m}=0,$$
		for every $j$. It follows that $u\in\Ecm(\Omega)$. But $u\not\in\mathcal{N}_m(\Omega)$ since the boundary value of $u$ is a nonzero (see Remark \ref{re:cegrell} (ii)).
\end{remark}
The following theorem shows that the $m-$weighted energies of functions of classes $\Ecm$ are finite.
\begin{theorem}\label{finite m-weighted energy}
	Let $\chi:\R^-\rightarrow\R^-$ be an increasing function  and $u\in\Ecm(\Omega)$. Then $$\int\limits_\Omega(-\chi\circ u)(dd^cu)^m\wedge\omega^{n-m}<\infty.$$
\end{theorem}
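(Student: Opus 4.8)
The plan is to read off the finiteness of the $\chi$-energy of the limit $u$ from the uniform bound built into the definition of $\Ecm(\Om)$, by a lower semicontinuity argument. If $\chi\equiv 0$ the integrand vanishes identically and there is nothing to prove, so assume $\chi\not\equiv 0$; then Theorem \ref{Main theorem 1}(i) gives $u\in\Em(\Om)$, so that $\mu:=(dd^cu)^m\wedge\om^{n-m}$ is a well-defined Radon measure. Fix a sequence $(u_j)\subset\Eom(\Om)$ with $u_j\searrow u$ and $C:=\sup_j\int_\Om(-\chi\circ u_j)(dd^cu_j)^m\wedge\om^{n-m}<\infty$, and put $\mu_j:=(dd^cu_j)^m\wedge\om^{n-m}$. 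The first ingredient is the convergence theorem for the complex Hessian operator along decreasing sequences in $\Em(\Om)$, which yields that $\mu_j\to\mu$ in the vague topology of Radon measures on $\Om$.

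The heart of the argument uses that, since $\chi$ is increasing and $u_j\searrow u$, the nonnegative functions $-\chi\circ u_j$ increase pointwise to $-\chi\circ u$. To combine this monotonicity with the vague convergence of the $\mu_j$, I would approximate the weight from below by bounded, continuous, nonincreasing functions $\phi^{(i)}\colon\R^-\to[0,\infty)$ with $0\le\phi^{(i)}\le-\chi$ and $\phi^{(i)}\uparrow-\chi$ pointwise. Fix $i$ and $k$. For every $j\ge k$ monotonicity gives $\phi^{(i)}\circ u_j\ge\phi^{(i)}\circ u_k$, hence $\int_\Om\phi^{(i)}\circ u_j\,d\mu_j\ge\int_\Om\phi^{(i)}\circ u_k\,d\mu_j$; and since $u_k$ is upper semicontinuous while $\phi^{(i)}$ is continuous and nonincreasing, $\phi^{(i)}\circ u_k$ is nonnegative and lower semicontinuous, so the portmanteau inequality for vague convergence gives $\liminf_j\int_\Om\phi^{(i)}\circ u_k\,d\mu_j\ge\int_\Om\phi^{(i)}\circ u_k\,d\mu$. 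Chaining these and letting $k\to\infty$ (monotone convergence, as $\phi^{(i)}\circ u_k\uparrow\phi^{(i)}\circ u$) produces
\[
\int_\Om\phi^{(i)}\circ u\,d\mu\ \le\ \liminf_j\int_\Om\phi^{(i)}\circ u_j\,d\mu_j\ \le\ \sup_j\int_\Om(-\chi\circ u_j)\,d\mu_j\ =\ C ,
\]
using $\phi^{(i)}\le-\chi$ in the middle step. Finally, letting $i\to\infty$ and applying monotone convergence once more yields $\int_\Om(-\chi\circ u)\,d\mu\le C<\infty$, as wanted; note that this also forces $\mu$ to carry no mass on $\{u=-\infty\}$ when $\chi(-\infty)=-\infty$, in agreement with Theorem \ref{vanish on polar sets}.

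The step I expect to be the main obstacle is the passage to continuous minorants, which is forced on us by the fact that $-\chi$ is in general neither continuous nor bounded. Continuity of $\phi^{(i)}$ is precisely what guarantees lower semicontinuity of $\phi^{(i)}\circ u_k$, and hence the validity of the portmanteau step, so one must verify that bounded continuous nonincreasing minorants increasing to $-\chi$ can indeed be selected, and---more delicately---that no mass is lost as $i\to\infty$. Since a monotone $-\chi$ has at most countably many discontinuities, the only levels at which the minorants may fail to recover $-\chi$ are countably many, and one has to check that the corresponding level sets $\{u=c\}$ contribute no residual mass; controlling this bookkeeping (together with the value of $-\chi$ at $-\infty$) is where the real care lies. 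Once these semicontinuity technicalities are settled, the remaining estimates are routine.
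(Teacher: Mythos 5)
Your scheme works, and essentially coincides with the paper's, exactly in the regime where $-\chi$ is lower semicontinuous, i.e.\ where the increasing weight $\chi$ is right-continuous (in particular for continuous $\chi$). In that regime your write-up is in fact \emph{better} than the paper's: the paper's continuous case also combines weak convergence of $H_m(u_j)=(dd^cu_j)^m\wedge\omega^{n-m}$ with semicontinuity and Fatou, but its crucial step, the inequality
\[
\lim_{j\to\infty}\liminf_{k\to\infty}\int_\Omega(-\chi\circ u_k)\,H_m(u_j)\ \le\ \liminf_{j\to\infty}\int_\Omega(-\chi\circ u_j)\,H_m(u_j),
\]
is asserted with no justification (for fixed $j$ the inner $\liminf$ equals $\int(-\chi\circ u)\,H_m(u_j)$, which is \emph{larger} than the diagonal term, so the inequality does not follow from anything written there). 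Your double-indexed chaining --- use $\phi^{(i)}\circ u_j\ge\phi^{(i)}\circ u_k$ for $j\ge k$, apply the portmanteau inequality to the lsc function $\phi^{(i)}\circ u_k$, then let $k\to\infty$ --- is precisely the correct argument for that step, and your preliminary reduction via Theorem \ref{Main theorem 1}(i) (needed so that $H_m(u)$ is even defined) is a point the paper glosses over.

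The genuine gap is the one you flag at the end, and it is not repairable bookkeeping. The supremum of all continuous minorants of the nonincreasing function $-\chi$ is its lsc regularization, namely the right-limit function $t\mapsto-\chi(t^{+})$; so your $\phi^{(i)}$ increase to $-\chi(\cdot^{+})$, not to $-\chi$, and at every right-discontinuity of $\chi$ they stay strictly below. The level sets $\{u=c\}$ at such points can carry positive --- even full --- Hessian mass, so the final monotone convergence step collapses, and indeed the bound you claim, $\int_\Omega(-\chi\circ u)\,H_m(u)\le C$, is \emph{false} in general: take $m=n=1$, $\Omega$ the unit disc, $u=\max(\log|z|,-1)$, $u_j=\max(\log|z|,-1+2^{-j})$, and $\chi=-1$ on $(-1,0]$, $\chi=-2$ on $(-\infty,-1]$; if $\tau$ denotes the total mass of $dd^cu$, then $\int(-\chi\circ u_j)\,dd^cu_j=\tau$ for every $j$ while $\int(-\chi\circ u)\,dd^cu=2\tau>\tau$, because all of the mass of $dd^cu$ sits on $\{u=-1\}$, exactly where the minorants lose the jump. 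So no refinement of the minorant scheme can close this. You should know, though, that the paper's own proof has the mirror image of the same gap: its second step claims one can choose continuous increasing $\chi_p\searrow\chi$ pointwise, which is possible only when $\chi$ is usc, i.e.\ right-continuous; and for genuinely right-discontinuous $\chi$ the statement of the theorem itself appears to fail (already for $m=n=1$ one can build a bounded radial $u$ with masses $m_i$ at levels $v_i\uparrow0$ growing so fast that $\sum_i(-\chi(v_i))m_i=\infty$, together with a decreasing sequence in $\mathcal{E}_{0,1}$, obtained by slightly scaling $u$ and cutting off near the boundary, whose Riesz masses sit at values just above the $v_i$ where $-\chi$ is tiny, so its energies stay bounded). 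The honest conclusion: your proof, like the paper's, establishes the theorem under the additional hypothesis that $\chi$ is right-continuous; with that hypothesis stated, your argument is complete and is the cleaner of the two.
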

\begin{proof}
	Let $(u_j)$ be a sequence in $\Eom(\Omega)$ decreasing to $u$ such that
	$$M:=\sup\limits_j\int\limits_\Omega(-\chi\circ u_j)((dd^cu_j)^m\wedge\omega^{n-m})<\infty.$$
	Also, we have $(dd^cu_j)^m\wedge\omega^{n-m}\rightharpoonup (dd^cu)^m\wedge\omega^{n-m}$. 
	
	First, we consider the case where $\chi$ is a continuous function. Then, since $(-\chi\circ u_j)\nearrow(-\chi\circ u)$ and all of them are lower continuous, we have
	\begin{align*}
	\int\limits_\Omega(-\chi\circ u)(dd^cu)^m\wedge\omega^{n-m}&=\lim\limits_{j\rightarrow\infty}	\int\limits_\Omega(-\chi\circ u)(dd^cu_j)^m\wedge\omega^{n-m}\\
	&=\lim\limits_{j\rightarrow\infty}	\int\limits_\Omega\liminf\limits_{k\rightarrow\infty}(-\chi\circ u_k)(dd^cu_j)^m\wedge\omega^{n-m}\\
	&\leq\lim\limits_{j\rightarrow\infty}\liminf\limits_{k\rightarrow\infty}	\int\limits_\Omega(-\chi\circ u_k)(dd^cu_j)^m\wedge\omega^{n-m}\\
	&\leq \liminf\limits_{j\rightarrow\infty}	\int\limits_\Omega(-\chi\circ u_j)(dd^cu_j)^m\wedge\omega^{n-m}\\&=M<\infty.
	\end{align*}
	
	Next, we consider the case $\chi(-\infty)>-\infty$. We can find a sequence of increasing continuous function $(\chi_p)$ such that $\chi_p\searrow\chi$ on $\R^-$. The previous case implies that
	$$\int\limits_\Omega(-\chi_p\circ u)(dd^cu)^m\wedge\omega^{n-m}<\infty,$$
	for every $p$. Let $p\rightarrow\infty$, by Lebesgue monotone convergence theorem, we have
	$$\int\limits_\Omega(-\chi\circ u)(dd^cu)^m\wedge\omega^{n-m}<\infty.$$
	
	In general case, for each $q\in\N$, we set $\chi_q=\max (\chi,-q)$. By the previous case, we have
	$$\int\limits_\Omega(-\chi_q\circ u)(dd^cu)^m\wedge\omega^{n-m}<\infty,$$
	for every $q$. Again, letting $q\rightarrow\infty$ and using Lebesgue monotone convergence theorem, we get 
	$$\int\limits_\Omega(-\chi\circ u)(dd^cu)^m\wedge\omega^{n-m}<\infty.$$
	The proof is completed.
\end{proof}
Next, we consider the condition $\chi(-t)$ decreases to $-\infty$ when $t$ increases to $+\infty$.
The following theorem shows that the Hessian measure of functions of classes $\Ecm$ vanish on $m-$polar sets.
\begin{theorem}\label{chi=-infty}
	Let $\chi:\R^-\rightarrow\R^-$ be an increasing function. Then $\Ecm(\Omega)\subset\Eam(\Omega)$ if and only if $\chi(-\infty)=-\infty$.
\end{theorem}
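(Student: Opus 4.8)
The plan is to treat the two implications separately. The sufficiency direction ($\chi(-\infty)=-\infty \Rightarrow \Ecm(\Omega)\subset\Eam(\Omega)$) is the substantive half and follows from a tail estimate on the weighted energy combined with Theorems \ref{finite m-weighted energy} and \ref{vanish on polar sets}; the necessity direction is established by exhibiting a counterexample when $\chi$ is bounded.

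For sufficiency, fix $u\in\Ecm(\Omega)$ and assume $\chi(-\infty)=-\infty$. First observe $\chi\not\equiv 0$, so Theorem \ref{Main theorem 1}(i) gives $u\in\Em(\Omega)$; in particular the measure $(dd^cu)^m\wedge\omega^{n-m}$ is well defined. By Theorem \ref{finite m-weighted energy} the weighted energy is finite, so I set $I:=\int_\Omega(-\chi\circ u)(dd^cu)^m\wedge\omega^{n-m}<\infty$. The decisive step is a tail estimate: for $s>0$ large enough that $\chi(-s)<0$, the set $\{u<-s\}$ is open and on it $-\chi\circ u\geq -\chi(-s)>0$ since $\chi$ is increasing, whence
$$(-\chi(-s))\int_{\{u<-s\}}(dd^cu)^m\wedge\omega^{n-m}\leq\int_{\{u<-s\}}(-\chi\circ u)(dd^cu)^m\wedge\omega^{n-m}\leq I.$$
Thus $\int_{\{u<-s\}}(dd^cu)^m\wedge\omega^{n-m}\leq I/(-\chi(-s))$. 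Because $\{u=-\infty\}\subset\{u<-s\}$ for every $s$ and $-\chi(-s)\to+\infty$ as $s\to\infty$ (this is exactly where $\chi(-\infty)=-\infty$ enters), letting $s\to\infty$ forces $\int_{\{u=-\infty\}}(dd^cu)^m\wedge\omega^{n-m}=0$.

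To conclude that $u\in\Eam(\Omega)$ I would take an arbitrary $m$-polar set $E$ and split it as $(E\cap\{u>-\infty\})\cup(E\cap\{u=-\infty\})$. On the first piece Theorem \ref{vanish on polar sets} supplies no mass, while the second piece lies in $\{u=-\infty\}$, which carries no mass by the previous paragraph. Hence $(dd^cu)^m\wedge\omega^{n-m}$ annihilates $E$, so $u\in\Eam(\Omega)$.

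For necessity I would argue by contraposition. If $\chi(-\infty)>-\infty$, then $-\chi$ is bounded on $\R^-$ by $c:=-\chi(-\infty)<\infty$, so for any $u\in\Fm(\Omega)$ with defining sequence $(u_j)\subset\Eom(\Omega)$ one has $\int_\Omega(-\chi\circ u_j)(dd^cu_j)^m\wedge\omega^{n-m}\leq c\int_\Omega(dd^cu_j)^m\wedge\omega^{n-m}$, a uniformly bounded quantity; therefore $\Fm(\Omega)\subset\Ecm(\Omega)$. It then suffices to produce a member of $\Fm(\Omega)$ whose Hessian measure charges an $m$-polar set: solving the Dirichlet problem in $\Fm(\Omega)$ for a Dirac mass at a point $z_0\in\Omega$ (the $m$-subharmonic analogue of the Green-function construction, whose Hessian measure is $\delta_{z_0}$) yields $u\in\Fm(\Omega)$ with $(dd^cu)^m\wedge\omega^{n-m}=\delta_{z_0}$, and since $\{z_0\}$ is $m$-polar this $u$ lies in $\Ecm(\Omega)\setminus\Eam(\Omega)$, giving $\Ecm(\Omega)\not\subset\Eam(\Omega)$.

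The sufficiency estimate is essentially routine once Theorems \ref{finite m-weighted energy} and \ref{vanish on polar sets} are in hand; the genuine obstacle is the necessity direction, which hinges on the strict inclusion $\Fam(\Omega)\subsetneq\Fm(\Omega)$, that is, on the existence of an $\Fm(\Omega)$ function carrying mass on an $m$-polar set. I note that this requires the inclusion $\Fm(\Omega)\not\subset\Eam(\Omega)$, so the relevant statement in Lemma \ref{lem:convexcone}(ii) should be read as concerning the finite-energy class $\Fpm(\Omega)$ rather than $\Fm(\Omega)$; it is precisely the failure of $\Fm(\Omega)\subset\Eam(\Omega)$ that makes the ``only if'' part true.
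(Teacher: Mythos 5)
Your proof is correct, and both halves follow the same overall strategy as the paper (a tail estimate to kill the mass on $\{u=-\infty\}$ for sufficiency; the inclusion $\Fm(\Omega)\subset\Ecm(\Omega)$ for bounded $\chi$ plus a mass-on-a-polar-set example for necessity), but the execution differs in ways worth recording. For sufficiency, the paper never invokes Theorem \ref{finite m-weighted energy}: it runs the tail estimate on the approximating sequence, showing that $\int_{\{u_j<-t\}}(dd^cu_k)^m\wedge\omega^{n-m}\leq M/(-\chi(-t))$ whenever $\chi(-t)\neq 0$, and then passes to three successive limits --- $j\to\infty$ using the increasing union $\{u<-t\}=\bigcup_j\{u_j<-t\}$, then $k\to\infty$ using weak convergence of $(dd^cu_k)^m\wedge\omega^{n-m}$ on the open set $\{u<-t\}$, then $t\to\infty$. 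Your route --- apply Theorem \ref{finite m-weighted energy} to get $I<\infty$ and estimate the tail of $u$ itself --- is shorter and avoids the weak-convergence step entirely; it is legitimate, since Theorem \ref{finite m-weighted energy} is proved before this theorem and does not depend on it, and since $u\in\Em(\Omega)$ (hence the Hessian measure of $u$ is defined) is secured by Theorem \ref{Main theorem 1}(i) exactly as you say. For necessity, your version is in fact more careful than the paper's: the paper deduces $\Fm(\Omega)\subset\Ecm(\Omega)$ and then cites Lemma \ref{lem:convexcone}(ii) to the effect that $\Fm(\Omega)$ is a \emph{proper subset} of $\Eam(\Omega)$, which as written is both factually wrong (the $m$-Green function, whose Hessian measure is a Dirac mass, lies in $\Fm(\Omega)\setminus\Eam(\Omega)$) and logically insufficient (an inclusion $\Fm(\Omega)\subset\Eam(\Omega)$ could never produce $\Ecm(\Omega)\not\subset\Eam(\Omega)$). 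What the argument actually needs, and what you supply explicitly via the Green-function construction, is $\Fm(\Omega)\not\subset\Eam(\Omega)$; your closing remark that Lemma \ref{lem:convexcone}(ii) should be read as a statement about the finite-energy classes $\Epm(\Omega),\Fpm(\Omega)$ rather than $\Fm(\Omega)$ is the right diagnosis of that misstatement, and your counterexample repairs the gap in the paper's own necessity proof.
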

\begin{proof}
	First, we suppose that $\chi(-\infty)=-\infty$. 
	Let $u\in\Ecm(\Omega)$ and $(u_j)$ be a sequence in $\Eom(\Omega)$ decreasing to $u$ such that
	$$M:=\sup\limits_j\int\limits_\Omega(-\chi\circ u_j)(dd^cu_j)^m\wedge\omega^{n-m}<\infty.$$
	By Theorem \ref{Main theorem 1}, we have $u\in\Em(\Omega)$. It remains to show that the measure $(dd^cu)^m\wedge\omega^{n-m}$ vanishes on every $m-$polar set. However, this measure vanishes on every $m-$polar set $E\subset\{u>-\infty\}$ thanks to Theorem \ref{vanish on polar sets}. Thus, we only need to show that $(dd^cu)^m\wedge\omega^{n-m}$ vanishes on $\{u=-\infty\}$.
	Indeed, by the increasing of $\chi$, we have, for every $j,k,$ and for every $t$ such that $\chi(-t)\neq0$,
	$$\int\limits_{\{u_j<-t\}}(dd^cu_k)^m\wedge\omega^{n-m}\leq \frac{1}{-\chi(-t)}\int\limits_{\{u_j<-t\}}(-\chi\circ u_k)(dd^cu_k)^m\wedge\omega^{n-m}\leq\frac{M}{-\chi(-t)}.$$ 
	Let $j\rightarrow\infty$, we obtain, for every $k$, and for every $t$ such that $\chi(-t)\neq0$,
	$$\int\limits_{\{u<-t\}}(dd^cu_k)^m\wedge\omega^{n-m}\leq\frac{M}{-\chi(-t)}.$$
	Next, let $k\rightarrow\infty$, we get, for every $t$ such that $\chi(-t)\neq0$,
	$$\int\limits_{\{u<-t\}}(dd^cu)^m\wedge\omega^{n-m}\leq\frac{M}{-\chi(-t)}.$$
	Finally, let $t\rightarrow\infty$, we have
	$$\int\limits_{\{u=-\infty\}}(dd^cu)^m\wedge\omega^{n-m}=0,$$ as desired.
	
	Now, we suppose that $\chi(-\infty)\neq-\infty$. Then, by the increasing of $\chi$, it follows that $\Fm(\Omega)\subset\Ecm(\Omega)$. However, we have $\Fm(\Omega)$ is a proper subset of $\Eam(\Omega)$ ( see Lemma \ref{lem:convexcone} (ii)). Therefore, $\Ecm(\Omega)$ is not a subset of $\Eam(\Omega)$.
	
	The proof is completed.
\end{proof}
If we further assume that $\chi(0)\neq0$, we have the following results about the relationship between classes $\Ecm$ and class $\Fm$.
\begin{theorem}\label{chi=0 main 1}
	Let $\chi:\R^-\rightarrow\R^-$ be an increasing function such that $\chi(0)\neq0$ and $\chi(-\infty)=-\infty$. Then
	$$\Ecm(\Omega)\subset\Fam(\Omega).$$
\end{theorem}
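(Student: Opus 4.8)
The plan is to establish the two defining features of $\Fam(\Omega)$ separately, namely membership in $\Fm(\Omega)$ and the vanishing of the Hessian mass on $m$-polar sets, each of which should follow almost immediately from the hypotheses together with the results already in place. Fix $u\in\Ecm(\Omega)$ and take a defining sequence $(u_j)\subset\Eom(\Omega)$ with $u_j\searrow u$ and $C:=\sup_j\int_\Omega(-\chi\circ u_j)(dd^cu_j)^m\wedge\omega^{n-m}<\infty$.

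The first and decisive observation I would use is that the hypothesis $\chi(0)\neq 0$ forces $\chi(0)<0$, since $\chi$ takes values in $\R^-$. As $\chi$ is increasing and every $u_j\leq 0$, it follows that $\chi\circ u_j\leq\chi(0)<0$ pointwise, hence $-\chi\circ u_j\geq-\chi(0)>0$ everywhere on $\Omega$. This uniform positive lower bound converts the weighted mass bound into an ordinary one:
\[
\int_\Omega(dd^cu_j)^m\wedge\omega^{n-m}\leq\frac{1}{-\chi(0)}\int_\Omega(-\chi\circ u_j)(dd^cu_j)^m\wedge\omega^{n-m}\leq\frac{C}{-\chi(0)}<\infty,
\]
uniformly in $j$. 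Thus $(u_j)\subset\Eom(\Omega)$ decreases to $u$ with uniformly bounded total Hessian mass, which is exactly the defining condition for $\Fm(\Omega)$; hence $u\in\Fm(\Omega)$.

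It then remains only to upgrade this to $\Fam(\Omega)$, that is, to verify that $(dd^cu)^m\wedge\omega^{n-m}$ charges no $m$-polar set. This is precisely what Theorem \ref{chi=-infty} delivers under its hypothesis $\chi(-\infty)=-\infty$, which is assumed here: it yields $\Ecm(\Omega)\subset\Eam(\Omega)$, so in particular the Hessian measure of our $u$ vanishes on every $m$-polar set. Combining $u\in\Fm(\Omega)$ with this vanishing property gives $u\in\Fm^a(\Omega)=\Fam(\Omega)$, as required.

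I do not anticipate a genuine obstacle here: the whole content is the elementary remark that a strictly negative value $\chi(0)$ provides a uniform positive lower bound for $-\chi\circ u_j$, so that the very sequence witnessing $u\in\Ecm(\Omega)$ already witnesses $u\in\Fm(\Omega)$, with no appeal to the existence/uniqueness machinery (Theorem \ref{u=v}) nor to parts (i)--(ii) of Theorem \ref{Main theorem 1}. The only two points deserving a line of care are that $\chi(0)<0$ really does follow from $\chi(0)\neq 0$ together with $\chi(\R^-)\subset\R^-$, and that Theorem \ref{chi=-infty} may be invoked since its hypothesis $\chi(-\infty)=-\infty$ is in force.
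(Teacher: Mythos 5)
Your proof is correct and follows essentially the same route as the paper: the uniform bound $\int_\Omega(dd^cu_j)^m\wedge\omega^{n-m}\leq M/(-\chi(0))$ coming from $\chi(0)<0$ and monotonicity gives $u\in\Fm(\Omega)$, and the vanishing on $m$-polar sets comes from the hypothesis $\chi(-\infty)=-\infty$ via Theorem \ref{chi=-infty}. The only cosmetic difference is that you invoke the statement of Theorem \ref{chi=-infty} directly, whereas the paper says to repeat the first part of its proof; your version is, if anything, the cleaner citation.
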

\begin{proof}
	Let $u\in\Ecm(\Omega)$. Then there exists a sequence $(u_j)\subset\Eom(\Omega)$ such that $u_j\searrow u$ and
	$$M:=\sup\limits_j\int\limits_\Omega(-\chi\circ u_j)(dd^cu_j)^m\wedge\omega^{n-m}<\infty.$$
	Hence, since $\chi$ is an increasing function and $\chi(0)\neq 0$, we have
	$$\int\limits_\Omega(dd^cu_j)^m\wedge\omega^{n-m}\leq\frac{1}{-\chi(0)}\int\limits_\Omega(-\chi\circ u_j)(dd^cu_j)^m\wedge\omega^{n-m}\leq\frac{M}{-\chi(0)}.$$
	This show that $$\sup\limits_j\int\limits_\Omega (dd^cu_j)^m\wedge\omega^{n-m}<+\infty.$$
	Therefore, we have $u\in\Fm(\Omega)$. Repeat the first part of proof of Theorem \ref{chi=-infty}, we have $(dd^cu)^m\wedge\omega^{n-m}$ does not charge on $m-$polar sets, and hence $u\in\Fam(\Omega)$, as desired.
\end{proof}
\begin{remark}
	The condition $\chi(0)\neq0$ of Theorem \ref{chi=0 main 1} is sharp. Indeed, in case $m=n$ and $\chi(t)=t$, Example 3.11 in \cite{C98} provides a counterexample.
\end{remark}
\begin{proposition}
	$$\Fm(\Omega)\cap L^\infty(\Omega)=\bigcap_{\chi\in\mathcal{X}}\Ecm(\Omega),$$
	where $\mathcal{X}:=\{\chi:\R^-\rightarrow\R^-:\ \chi \text{ is increasing},\ \chi(0)\neq0\text{  and } \chi(-\infty)=-\infty\}.$
\end{proposition}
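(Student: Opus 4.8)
The plan is to prove the two set inclusions separately; the inclusion $\subseteq$ is routine, whereas $\supseteq$ carries the real content. For $\subseteq$, I would fix $u\in\Fm(\Omega)\cap L^\infty(\Omega)$, say $-C\le u\le 0$ with $C>0$, and take a defining sequence $(u_j)\subset\Eom(\Omega)$ witnessing $u\in\Fm(\Omega)$, so that $u_j\searrow u$ and $\sup_j\int_\Omega(dd^cu_j)^m\wedge\omega^{n-m}<\infty$. Since $u_j\ge u\ge-C$ and $\chi$ is increasing, $-\chi\circ u_j\le-\chi(-C)$, and $-\chi(-C)>0$ because $\chi(-C)\le\chi(0)<0$ for $\chi\in\mathcal{X}$. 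Hence the \emph{same} sequence gives $\sup_j\int_\Omega(-\chi\circ u_j)(dd^cu_j)^m\wedge\omega^{n-m}\le(-\chi(-C))\sup_j\int_\Omega(dd^cu_j)^m\wedge\omega^{n-m}<\infty$, so $u\in\Ecm(\Omega)$; as $\chi\in\mathcal{X}$ was arbitrary, $u\in\bigcap_{\chi\in\mathcal{X}}\Ecm(\Omega)$.

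For $\supseteq$, let $u\in\bigcap_{\chi\in\mathcal{X}}\Ecm(\Omega)$. Choosing any single $\chi\in\mathcal{X}$ and applying Theorem \ref{chi=0 main 1} already gives $u\in\Fam(\Omega)\subset\Fm(\Omega)$; in particular $\mu:=(dd^cu)^m\wedge\omega^{n-m}$ is a finite measure putting no mass on $m$-polar sets. It remains to prove $u\in L^\infty(\Omega)$, and I argue by contradiction, assuming $\inf_\Omega u=-\infty$. The crucial observation is that then $a_s:=\mu(\{u<-s\})>0$ for every $s>0$. Indeed, if $\mu(\{u<-s_0\})=0$ for some $s_0$, then Theorem \ref{Inequalities for Capacity} yields $t^m\,\ca(\{u<-s_0-t\})\le\mu(\{u<-s_0\})=0$, hence $\ca(\{u<-s_0-t\})=0$ for all $t>0$. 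Comparing with the Hessian measure of a suitably rescaled $A(|z|^2-R^2)\in[-1,0]$, which is a positive multiple of Lebesgue measure, a set of zero $m$-capacity is Lebesgue-null; since $\{u<-s_0-t\}$ is open (as $u$ is upper semicontinuous), it must be empty, and letting $t\to0^+$ forces $u\ge-s_0$, contradicting unboundedness. Moreover $a_s\downarrow\mu(\{u=-\infty\})=0$ as $s\to\infty$, because $\mu$ charges no $m$-polar set.

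With this tail-positivity in hand I would construct the offending weight. Picking $s_k\uparrow\infty$, define an increasing right-continuous step function $g\colon[0,\infty)\to[1,\infty)$ with $g(0)=1$ and jumps $\Delta g_k$ at $s_k$ chosen large enough that $a_{s_k}\Delta g_k\ge1$; since $a_{s_k}\to0$ one may simultaneously keep $\sum_k\Delta g_k=\infty$, so $g(\infty)=\infty$, and then $\chi(-s):=-g(s)$ belongs to $\mathcal{X}$. The layer-cake identity then gives $\int_\Omega(-\chi\circ u)\,d\mu=\int_\Omega g(-u)\,d\mu\ge\sum_k a_{s_k}\Delta g_k\ge\sum_k 1=\infty$, which contradicts Theorem \ref{finite m-weighted energy}, guaranteeing this integral is finite for $u\in\Ecm(\Omega)$. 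Hence $u$ is bounded, so $u\in\Fm(\Omega)\cap L^\infty(\Omega)$, completing the reverse inclusion.

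The elementary direction aside, the heart of the argument, and the step I expect to be most delicate, is converting the \emph{mere} unboundedness of $u$ into an \emph{infinite} $\chi$-energy for an admissible weight. This rests on the claim $a_s>0$ (from the capacity inequality together with the openness of sublevel sets and the fact that $m$-polar sets are Lebesgue-null), and on carefully exploiting the freedom to let $g$ grow as fast as $1/a_s$ dictates while still keeping $\chi(0)\ne0$ and $\chi(-\infty)=-\infty$, so that the constructed weight genuinely lies in $\mathcal{X}$.
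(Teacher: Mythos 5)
Your proof is correct and follows essentially the same route as the paper's: the easy inclusion is handled identically, and for the reverse inclusion both arguments reduce to the case $u\in\Fm(\Omega)\setminus L^\infty(\Omega)$ via Theorem \ref{chi=0 main 1}, establish $\mu(\{u<-s\})>0$ for all $s>0$ from Theorem \ref{Inequalities for Capacity} together with the fact that non-empty open sets have positive $m$-capacity, and then produce a weight in $\mathcal{X}$ whose $\chi$-energy of $u$ is infinite, contradicting Theorem \ref{finite m-weighted energy}. The only deviation is cosmetic: the paper builds the bad weight by prescribing $\chi_0'(-t)=1/\mu(\{u<-t\})$ and computing the energy by the layer-cake formula, whereas you use a step-function weight with jumps of size at least $1/\mu(\{u<-s_k\})$ at points $s_k\uparrow\infty$ --- a discrete version of the same construction that, if anything, sidesteps the measurability and integrability bookkeeping implicit in the paper's definition.
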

\begin{proof}
	Suppose that $u\in \Fm(\Omega)\cap L^\infty(\Omega)$. Then there exists a sequence $(u_j)\subset\Eom(\Omega)$ such that $u_j\searrow u$ and
	$$\sup\limits_j\int\limits_\Omega(dd^cu_j)^m\wedge\omega^{n-m}<\infty.$$
	Therefore, for any $\chi\in\mathcal{X}$, we have
	\begin{align*}
	\sup\limits_j\int\limits_\Omega(-\chi\circ u_j)(dd^cu_j)^m\wedge\omega^{n-m}&
	\leq \sup\limits_j\Big(\sup\limits_\Omega|\chi\circ u_j|\Big)\int\limits_\Omega(dd^cu_j)^m\wedge\omega^{n-m}\\
	&\leq \Big(\sup\limits_\Omega|\chi\circ u|\Big)\sup\limits_j\int\limits_\Omega(dd^cu_j)^m\wedge\omega^{n-m}\\&<\infty,
	\end{align*}
	and so that $u\in\bigcap\limits_{\chi\in\mathcal{X}}\Ecm(\Omega)$.
	
	Conversely, suppose that $u\not\in \Fm(\Omega)\cap L^\infty(\Omega)$, we need to show that $u\not\in \bigcap\limits_{\chi\in\mathcal{X}}\Ecm(\Omega)$. By Theorem \ref{chi=0 main 1}, we can assume that $u\in\Fm(\Omega)\setminus L^\infty(\Omega)$. Then the sublevel set $\{u<-s\}$ are non empty open subsets for all $s>0$. Hence, by Theorem \ref{Inequalities for Capacity}, we have, for every $s>0$,
	$$\int\limits_{\{u<-s\}}(dd^cu)^m\wedge\omega^{n-m}\leq \text{Cap}_{m,\Omega}(\{u<-s-1\})\leq \text{V}_{2n}(\{u<-s-1\})>0.$$
	Therefore, we can consider the function $\chi_0:\R^-\rightarrow\R^-$ such that $\chi_0(0)\neq0$ and
	$$\chi_0'(-t)=\frac{1}{\int\limits_{\{u<t\}}(dd^cu)^m\wedge\omega^{n-m}},\text{ for all }t>0.$$
	Obviously, $\chi_0$ is increasing. Since $u\in\Fm(\Omega)$, we have $\int\limits_\Omega(dd^cu)^m\wedge\omega^{n-m}<\infty$, and so that $\chi_0'(-t)\geq \frac{1}{\int\limits_\Omega(dd^cu)^m\wedge\omega^{n-m}}>0$ for all $t>0$. This implies that $\chi_0(-\infty)=-\infty$. Therefore, $\chi_0\in\mathcal{X}$. Now
	\begin{align*}
	\int\limits_\Omega(-\chi_0\circ u)(dd^cu)^m\wedge\omega^{n-m}&=\int_0^{+\infty}(dd^cu)^m\wedge\omega^{n-m}\Big(\{u<\chi_0^{-1}(t)\}\Big)dt\\
	&=\int_0^{+\infty}\chi_0'(-s)(dd^cu)^m\wedge\omega^{n-m}\Big(\{u<-s\}\Big)ds\\
	&=\int_0^{+\infty} ds=\infty.
	\end{align*}
	It follows that $u\not\in\E_{m,\chi_0}(\Omega)$, and hence $u\not\in\bigcap\limits_{\chi\in\mathcal{X}}\Ecm(\Omega)$, as desired.
\end{proof}
The following theorem helps us to understand classes $\Ecm$ through the capacity of sublevel sets.

\begin{theorem}\label{thm:in-ca}
	Let $\chi:\R^-\rightarrow\R^-$ be an increasing function such that $\chi\in C^1(\R^-)$. Then
	$$\Ecm(\Omega)\supset\Bigg\{u\in\SHm^-(\Omega):\ \int\limits_0^\infty t^m\chi'(-t)\text{Cap}_{m,\Omega}(\{u<-t\})dt<+\infty  \Bigg\}.$$
\end{theorem}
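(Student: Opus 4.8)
The plan is to exhibit, for each $u$ in the right-hand side, a sequence in $\Eom(\Omega)$ decreasing to $u$ whose weighted energies are dominated by the capacity integral in the hypothesis. The natural candidate is the canonical truncation $u_j:=\max(u,-j)$, which is bounded and decreases to $u$. A preliminary observation is that the finiteness of $\int_0^\infty t^m\chi'(-t)\,\text{Cap}_{m,\Omega}(\{u<-t\})\,dt$ forces $\{u<-t\}\Subset\Omega$ for every $t>0$ (equivalently, $u$ tends to $0$ at $\partial\Omega$): if some sublevel set reached the boundary, its $m$-capacity would be infinite on an interval $(0,t_1)$ on which $\chi'>0$, and the integrand would fail to be integrable there. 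In particular each $\text{Cap}_{m,\Omega}(\{u<-t\})$ is finite and each $u_j$ has zero boundary values.

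The computational heart is a layer-cake identity. Writing $\mu_j:=(dd^cu_j)^m\wedge\om^{n-m}$ and using the elementary formula $-\chi(u_j)=-\chi(0)+\int_0^\infty\chi'(-t)\mathbf{1}_{\{u_j<-t\}}\,dt$ together with Tonelli's theorem (all integrands are nonnegative since $\chi\le0$ and $\chi'\ge0$), one obtains
\begin{equation*}
\int_\Omega(-\chi\circ u_j)\,d\mu_j=-\chi(0)\,\mu_j(\Omega)+\int_0^\infty\chi'(-t)\,\mu_j(\{u_j<-t\})\,dt.
\end{equation*}
For the integral term I would invoke Theorem \ref{Inequalities for Capacity}: since $\Eom(\Omega)\subset\Fm(\Omega)$ by Remark \ref{re:cegrell}, it applies to $u_j$ and gives $\mu_j(\{u_j<-t\})\le t^m\,\text{Cap}_{m,\Omega}(\{u_j<-t\})$. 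Because $u_j\ge u$ we have $\{u_j<-t\}\subset\{u<-t\}$, so monotonicity of capacity yields $\mu_j(\{u_j<-t\})\le t^m\,\text{Cap}_{m,\Omega}(\{u<-t\})$. Integrating, the second summand is bounded, uniformly in $j$, by $\int_0^\infty t^m\chi'(-t)\,\text{Cap}_{m,\Omega}(\{u<-t\})\,dt<+\infty$.

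The main obstacle is the first summand $-\chi(0)\,\mu_j(\Omega)$, which is a genuine difficulty only when $\chi(0)\ne0$ (if $\chi(0)=0$ it vanishes, the uniform energy bound is immediate, and this already secures $u_j\in\Eom(\Omega)$). The point is that a uniform bound on $\mu_j(\Omega)$ amounts to showing $u\in\Fm(\Omega)$, i.e.\ that $u$ carries finite Hessian mass near $\partial\Omega$, and this near-boundary mass is precisely what is \emph{not} seen by the sublevel estimates at any fixed level $t_0>0$. To extract it I would fix $t_0>0$ and split $\mu_j(\Omega)=\mu_j(\{u_j<-t_0\})+\mu_j(\{u_j\ge-t_0\})$; the first piece is at most $t_0^m\,\text{Cap}_{m,\Omega}(\{u<-t_0\})$ uniformly in $j$, while on the second piece $u_j=u$ on $\{u>-t_0\}$ for $j>t_0$, so $\mathbf{1}_{\{u>-t_0\}}\mu_j$ is independent of $j$. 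Establishing the finiteness of this fixed near-boundary mass is the delicate step, and it is exactly here that the behaviour of $\text{Cap}_{m,\Omega}(\{u<-t\})$ as $t\to0^+$—together with the lower bound in Theorem \ref{Inequalities for Capacity}, which ties $t^m\,\text{Cap}_{m,\Omega}(\{u<-t\})$ back to the total mass—must be used, and where the strict growth of $\chi$ near the origin is needed.

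Granting this uniform mass bound, one concludes $\sup_j\int_\Omega(-\chi\circ u_j)\,d\mu_j<\infty$; in particular each $u_j$ has finite mass and finite weighted energy, so $u_j\in\Eom(\Omega)$, and Definition \ref{def:chi} gives $u\in\Ecm(\Omega)$, as desired.
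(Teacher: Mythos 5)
The decisive gap is in your first step: the truncations $u_j=\max(u,-j)$ need not belong to $\Eom(\Omega)$, and the ``preliminary observation'' you use to guarantee this --- that finiteness of $\int_0^\infty t^m\chi'(-t)\text{Cap}_{m,\Omega}(\{u<-t\})\,dt$ forces $\{u<-t\}\Subset\Omega$ --- is false. Consider $u=\sum_j 2^{-j}g_{a_j}\in\Fm(\Omega)$, a convergent sum of $m$-Hessian Green-type functions (so $H_m(g_{a_j})=\delta_{a_j}$) whose poles $a_j$ accumulate at a boundary point: every sublevel set $\{u<-t\}$ then clusters on $\partial\Omega$, yet Theorem \ref{Inequalities for Capacity} yields $\text{Cap}_{m,\Omega}(\{u<-t\})\le 2^m t^{-m}\int_\Omega H_m(u)$, so for any \emph{bounded} $C^1$ weight, e.g.\ $\chi(t)=e^t-1$, the capacity integral converges. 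For such a $u$ the function $\max(u,-j)$ equals $-j$ at points arbitrarily close to $\partial\Omega$, hence has no zero boundary limit and lies outside $\Eom(\Omega)$: your sequence is inadmissible for Definition \ref{def:chi} precisely on examples where the theorem is true. (A second, independent defect: even when $u$ does tend to $0$ on $\partial\Omega$, membership in $\Eom(\Omega)$ also requires finite total Hessian mass, and when $\chi(0)=0$ the weighted energy gives no control of $H_m(u_j)$ on the region where $u_j$ is near $0$, so your parenthetical ``this already secures $u_j\in\Eom(\Omega)$'' does not follow either.) The paper sidesteps all of this with one tool you did not invoke: since $\Omega$ is $m$-hyperconvex, \cite[Theorem 3.1]{Lu15} supplies, for an \emph{arbitrary} $u\in\SHm^-(\Omega)$, some decreasing sequence $u_j\in\Eom(\Omega)$ with $u_j\searrow u$; as Definition \ref{def:chi} only asks for one good sequence, one then runs exactly your layer-cake computation on this sequence, using Theorem \ref{Inequalities for Capacity} for $u_j$ and the inclusion $\{u_j<-s\}\subset\{u<-s\}$ (from $u_j\ge u$) to get the uniform bound. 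That is the entire proof.

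Concerning the term $-\chi(0)\,\mu_j(\Omega)$, where $\mu_j:=H_m(u_j)$: here you are in fact more careful than the paper. Your layer-cake identity is the correct one, and the paper's proof writes the change of variables $\int_0^\infty\mu_j(\{-\chi\circ u_j>t\})\,dt=\int_0^\infty\chi'(-s)\,\mu_j(\{u_j<-s\})\,ds$ as an equality, which silently discards exactly this term; the equality is valid only under the normalization $\chi(0)=0$. However, the step you leave open (``establishing the finiteness of this fixed near-boundary mass'') cannot be completed in the stated generality, because the statement itself fails when $\chi$ is allowed to be flat with $\chi(0)\neq0$: for $\chi\equiv-1$ the integrand of the hypothesis vanishes (for every $u$ whose sublevel sets have finite capacity, and with the convention $0\cdot\infty=0$ for every $u\in\SHm^-(\Omega)$), so the right-hand class contains $u\equiv-1$, while $\Ecm(\Omega)=\Fm(\Omega)$ by the Remark following Definition \ref{def:chi}, and $u\equiv-1\notin\Fm(\Omega)$ since functions in $\Fm(\Omega)\subset\Nm(\Omega)$ have zero boundary values (Remark \ref{re:cegrell}). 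So the correct resolution is not a finer analysis of $\text{Cap}_{m,\Omega}(\{u<-t\})$ as $t\to0^+$, but the observation that the theorem (like the paper's proof) implicitly assumes $\chi(0)=0$, in which case the term you worried about is absent.
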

\begin{proof}
	Suppose that $u\in\SHm^-(\Omega)$ satisfying $\int\limits_0^\infty t^m\chi'(-t)\text{Cap}_{m,\Omega}(\{u<-t\})dt<+\infty.$ By \cite[Theorem 3.1]{Lu15}. There exists sequence $\{u_j\}\subset\Eom(\Omega)$ such that $u_j\searrow u$ in $\Omega$. Then, by Theorem \ref{Inequalities for Capacity}, we have
	\begin{align*}
	\int\limits_\Omega(-\chi\circ u_j)(dd^cu_j)^m\wedge\omega^{n-m}&=\int\limits_0^{+\infty}(dd^cu_j)^m\wedge\omega^{n-m}\Big(\Big\{-\chi\circ u_j>t\Big\}\Big)dt\\
	&=\int\limits_0^{+\infty}\chi'(-s)(dd^cu_j)^m\wedge\omega^{n-m}\Big(\Big\{u_j<-s\Big\}\Big)ds\\
	&\leq \int\limits_0^{+\infty}\chi'(-s)s^m\text{Cap}_{m,\Omega}\Big(\Big\{u_j<-s\Big\}\Big)ds\\
	&\leq \int\limits_0^{+\infty}\chi'(-s)s^m\text{Cap}_{m,\Omega}\Big(\Big\{u<-s\Big\}\Big)ds.
	\end{align*}
	Therefore,
	$$\sup\limits_j \int\limits_\Omega(-\chi\circ u_j)(dd^cu_j)^m\wedge\omega^{n-m}<+\infty,$$
	and so that $u\in\Ecm(\Omega)$ as desired.
\end{proof}

In case $\chi(-t)<0$ for every $t>0$, the following lemma shows that a function of class $\mathcal{N}_m$ whose finite $m-$weighted energy can be approximated by a decreasing sequence of functions of class $\Eom$ whose $m-$weighted energies converge to the $m-$weighted energy of $u$.
\begin{lemma}\label{Approximate 2}
	Let $\chi:\R^-\rightarrow\R^-$ be an increasing function such that $\chi(-t)<0$ for every $t>0$, and $u\in\mathcal{N}_m(\Omega)$. Suppose that
	$$\int\limits_\Omega(-\chi\circ u)(dd^cu)^m\wedge\omega^{n-m}<+\infty.$$
	Then there exists a sequence $\{u_j\}\in\Eom(\Omega)$ such that $u_j\searrow u$ and 
	$$\lim\limits_{j\rightarrow\infty}\int\limits_\Omega(-\chi\circ u_j)(dd^cu_j)^m\wedge\omega^{n-m}=\int\limits_\Omega(-\chi\circ u)(dd^cu)^m\wedge\omega^{n-m}.$$
\end{lemma}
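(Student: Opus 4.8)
The plan is to use the canonical decreasing approximation by maxima with the exhaustion function, and then to establish the energy convergence through a two-sided estimate; the genuinely difficult half is an upper bound that rules out any escape of weighted mass.

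First I would fix an exhaustion function $\rho\in\Eom(\Omega)$ with $-1\le\rho<0$ and set $u_j:=\max(u,j\rho)$. Since $j\rho\in\Eom(\Omega)$ (the class is a convex cone) and $u\in\SHm^-(\Omega)$, Lemma \ref{lem:convexcone}(i) yields $u_j\in\Eom(\Omega)$; and because $\rho<0$ on $\Omega$, the functions $j\rho$ decrease to $-\infty$ pointwise, so that $u_j\searrow u$. Hence $(u_j)$ is already an admissible sequence, and the whole content is the convergence of the weighted energies $E_j:=\int_\Omega(-\chi\circ u_j)(dd^cu_j)^m\wedge\om^{n-m}$ towards $E:=\int_\Omega(-\chi\circ u)(dd^cu)^m\wedge\om^{n-m}$.

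The inequality $E\le\liminf_jE_j$ is the lower-semicontinuity direction, and I would obtain it exactly as in the proof of Theorem \ref{finite m-weighted energy}: $-\chi\circ u_k\nearrow-\chi\circ u$ is a limit of lower semicontinuous weights, $(dd^cu_j)^m\wedge\om^{n-m}\rightharpoonup(dd^cu)^m\wedge\om^{n-m}$, and a monotone/Fatou interchange gives the bound for any decreasing sequence in $\Eom(\Omega)$ tending to $u$. The real work is the reverse inequality $\limsup_jE_j\le E$. Using the locality of the Hessian operator under a maximum, on the open set $\{u>j\rho\}$ one has $u_j=u$ and $(dd^cu_j)^m\wedge\om^{n-m}=(dd^cu)^m\wedge\om^{n-m}$, whereas on $\{u\le j\rho\}$ one has $u_j=j\rho$; this gives the exact decomposition
\[
E_j=\int_{\{u>j\rho\}}(-\chi\circ u)(dd^cu)^m\wedge\om^{n-m}+\int_{\{u\le j\rho\}}\bigl(-\chi(j\rho)\bigr)(dd^cu_j)^m\wedge\om^{n-m}.
\]
As $\{u>j\rho\}\nearrow\{u>-\infty\}$, the first term increases by monotone convergence to $\int_{\{u>-\infty\}}(-\chi\circ u)(dd^cu)^m\wedge\om^{n-m}$, which is at most $E$ and equals $E$ when $\chi(-\infty)=-\infty$ (finiteness of the energy then forces $(dd^cu)^m\wedge\om^{n-m}$ to put no mass on $\{u=-\infty\}$). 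Everything therefore reduces to showing that the tail term has $\limsup_j$ bounded by $\int_{\{u=-\infty\}}(-\chi\circ u)(dd^cu)^m\wedge\om^{n-m}$.

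This tail estimate is the step I expect to be the main obstacle, because two distinct escape mechanisms for the weighted mass must be excluded. Near $\partial\Omega$, where $u_j\to0$ and the weight $-\chi(j\rho)\to-\chi(0)$, it is precisely the hypothesis $u\in\Nm(\Omega)$, that is $\tilde u=0$, which prevents mass of $(dd^cu_j)^m\wedge\om^{n-m}$ from accumulating at the boundary. Near the polar set $\{u=-\infty\}$, where $\{u\le j\rho\}\searrow\{u=-\infty\}$ and $-\chi(j\rho)\to-\chi(-\infty)$, the contribution is held in check by the finiteness of $E$. To make the boundary part rigorous I would first reduce, as in Theorem \ref{finite m-weighted energy}, to $\chi$ continuous and bounded below (taking continuous $\chi_p\searrow\chi$ and $\chi_q=\max(\chi,-q)$ and passing to the limit by monotone convergence at the end), a reduction that only removes regularity issues and does not dissolve the analytic difficulty, which persists even for bounded continuous $\chi$ since the weight $-\chi\circ u_j$ itself varies with $j$. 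I would then compare $u_j$ on $\{u\le j\rho\}$ with truncations of $u$ by a sub-level argument in the spirit of Lemma \ref{Lemma 1}, using $\rho$ to bound $\int_{\{u\le j\rho\}}(dd^cu_j)^m\wedge\om^{n-m}$ by quantities that, after weighting, either vanish or match the polar contribution; together with the lower bound this forces $\lim_jE_j=E$. Reconciling the boundary behaviour of $\Nm(\Omega)$ functions with the value $\chi(0)$ of the weight is the delicate point on which I would concentrate.
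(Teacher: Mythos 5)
Your reduction is set up correctly (the choice $u_j=\max(u,j\rho)\in\Eom(\Omega)$, the exact decomposition of the energy $E_j$, and the easy direction $E\le\liminf_jE_j$ are all fine), but the proposal has a genuine gap exactly at the point you flag as the main obstacle: the tail estimate $\limsup_j\int_{\{u\le j\rho\}}\bigl(-\chi\circ(j\rho)\bigr)(dd^cu_j)^m\wedge\om^{n-m}\le\int_{\{u=-\infty\}}(-\chi\circ u)(dd^cu)^m\wedge\om^{n-m}$ is never proved, and none of the tools you invoke can supply it. The pointwise inequality $-\chi\circ(j\rho)\le-\chi\circ u$ on $\{u\le j\rho\}$ is of no help, because it must be integrated against $(dd^cu_j)^m\wedge\om^{n-m}$, which on that set bears no controlled relation to $(dd^cu)^m\wedge\om^{n-m}$; Lemma \ref{Lemma 1} compares a function with the solution of a Hessian equation with prescribed right-hand side, not the energies of $u$ and $\max(u,j\rho)$; and the reduction to continuous or bounded $\chi$, as you note yourself, removes no difficulty. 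The only estimate in the paper that bounds the energy of a larger function by the energy of a smaller one is Lemma \ref{Compa 1}, which (a) requires the doubling hypothesis $\chi(-2t)\ge a\chi(-t)$, not available in Lemma \ref{Approximate 2} where $\chi$ is an arbitrary increasing weight with $\chi(-t)<0$, and (b) in any case produces a multiplicative constant $2^m\max(a,2)$, which cannot yield the exact convergence $E_j\to E$. In the model cases your statement for canonical approximants is a genuine theorem (convergence of $\int(-u_j)^p(dd^cu_j)^m\wedge\om^{n-m}$ in $\E_{p,m}$, of total masses in $\Fm$), proved by integration-by-parts machinery specific to those weights; for a general $\chi$ it is essentially equivalent to the lemma you are trying to prove. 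So what you have is a reduction to the hard statement, not a proof of it.

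The paper avoids this difficulty by truncating the measure rather than the function, following Benelkourchi's idea. Using \cite[Theorem 5.9]{VN17} it produces $u_j\in\Em(\Omega)$ with $u_j\ge u$ and $(dd^cu_j)^m\wedge\om^{n-m}=\mathds{1}_{\{u>j\rho\}}(dd^cu)^m\wedge\om^{n-m}$; the comparison results \cite[Lemma 5.1, Corollary 5.8]{NVT19} then give $u_j\ge\max(u,j\rho)$ (hence $u_j\in\Eom(\Omega)$) and that $(u_j)$ is decreasing. With this construction your problematic direction is trivial: since $u_j\ge u$ and $(dd^cu_j)^m\wedge\om^{n-m}$ is literally a piece of $(dd^cu)^m\wedge\om^{n-m}$, one has $\int_\Omega(-\chi\circ u_j)(dd^cu_j)^m\wedge\om^{n-m}\le\int_\Omega(-\chi\circ u)(dd^cu)^m\wedge\om^{n-m}$ at once, and the convergence of energies is plain monotone convergence. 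The real work shifts to proving $\lim_ju_j=u$, and this is where the hypotheses $u\in\Nm(\Omega)$ and finiteness of the $\chi$-energy actually enter: they feed the uniqueness Theorem \ref{u=v}, applied with the auxiliary majorant $w=\tilde\chi\circ u_1$ built from a convexified weight $\tilde\chi\ge\chi$. In particular, your reading of the role of $\Nm(\Omega)$ --- preventing mass of $(dd^cu_j)^m\wedge\om^{n-m}$ from accumulating at $\partial\Omega$ --- does not correspond to any step that can be carried out in your scheme; in the paper the class $\Nm(\Omega)$ is used only through Theorem \ref{u=v}.
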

The proof use the same idea as in \cite{Ben11}.
\begin{proof}
	Let $\rho\in\Eom(\Omega)\cap C^\infty(\Omega)$ be a defining function for $\Omega$. For each $j$, by \cite[Lemma 5.5]{NVT19}, it follows that
	$$\mathds{1}_{\{u>j\rho\}}(dd^cu)^m\wedge\omega^{n-m}(\Omega)=\int\limits_{\{u>j\rho\}}(dd^cu_j)^m\wedge\omega^{n-m}\leq \int\limits_{\{u>j\rho\}}(dd^cj\rho)^m\wedge\omega^{n-m}<\infty.$$
	Then, by the fact that
	 $$(dd^cu)^m\wedge\omega^{n-m}\geq \mathds{1}_{\{u>j\rho\}}(dd^cu)^m\wedge\omega^{n-m},$$
	and by \cite[Theorem 5.9]{VN17}, it follows that there exists $u_j\in\Em(\Omega)$ such that $u_j\geq u$ and
	$$(dd^cu_j)^m\wedge\omega^{n-m}=\mathds{1}_{\{u>j\rho\}}(dd^cu)^m\wedge\omega^{n-m}.$$
	By Lemma \ref{vanish on polar sets}, we have  $(dd^cu_j)^m\wedge\omega^{n-m}$ vanishes on all $m-$polar subsets of $\Omega$ for every $j$.
	Moreover, by \cite[Lemma 5.1]{NVT19}, we have
	$$(dd^cu_j)^m\wedge\omega^{n-m}\leq \mathds{1}_{\{u>j\rho\}}(dd^c\max(u,j\rho)^m\wedge\omega^{n-m}\leq (dd^c\max(u,j\rho))^m\wedge\omega^{n-m}.$$
	Then, by \cite[Corollary 5.8]{NVT19}, it follows that $u_j\geq\max(u,j\rho)\geq j\rho$, and so that $u_j\in\Eom(\Omega)$.
	We observe that $(dd^cu_{j+1})^m\wedge\omega^{n-m}\geq (dd^cu_{j})^m\wedge\omega^{n-m}$ for every $j$. Again, by \cite[Corollary 5.8]{NVT19}, we have $u_j\geq u_{j+1}$ for every $j$. 	
	
	Next, let $t_0$ be a real number such that $\chi(-t_0)<0$. We choose an increasing function $\tilde{\chi}:\R^-\rightarrow\R^-$ such that  $\tilde{\chi}^{''}=\tilde{\chi}^{'}=0$ on $[-t_0,0]$, $\tilde{\chi}^{''}\geq0$ on $(-\infty, -t_0)$ and $\tilde{\chi}\geq\chi$ on $\R^-$. We have
	$$dd^c\tilde{\chi}(u_1)=\tilde{\chi}^{''}(u_1)du_1\wedge d^c u_1+\tilde{\chi}^{'}(u_1)dd^cu_1,$$
	and hence $(dd^c\tilde{\chi}(u_1))^m\geq0.$ Therefore, $\tilde{\chi}\circ u_1\in \SHm^-(\Omega)$. Also, we have $\tilde{\chi}\circ u_1\in L^\infty(\Omega)$, and so that $\tilde{\chi}\circ u_1\in \Em(\Omega).$ By the increasing of $\tilde{\chi}$, we have
	\begin{equation*}\label{tilde chi < infty}
	\int\limits_\Omega (-\tilde{\chi}\circ u_1)(dd^c u)^m\wedge\omega^{n-m}\leq 	\int\limits_\Omega (-\tilde{\chi}\circ u)(dd^c u)^m\wedge\omega^{n-m}<\infty.
	\end{equation*}
	
	Now we set $v:=\lim\limits_{j\rightarrow\infty}u_j$. We have $v\geq u$ and $$(dd^cv)^m\wedge\omega^{n-m}=\lim\limits_{j\rightarrow\infty}(dd^cu_{j})^m\wedge\omega^{n-m}=(dd^cu)^m\wedge\omega^{n-m}.$$ Then, applying Theorem \ref{u=v}, we have $u=v$, and so that $u_j\searrow u$.
	By monotone convergence, it follows that
	\begin{align*}
	\int\limits_\Omega(-\chi\circ u_j)(dd^cu_j)^m\wedge\omega^{n-m}&=\int\limits_\Omega(-\chi\circ u_j)\mathds{1}_{\{u>j\rho\}}(dd^cu)^m\wedge\omega^{n-m}\\
	&\rightarrow\int\limits_\Omega(-\chi\circ u)(dd^cu)^m\wedge\omega^{n-m}.
	\end{align*}
	The proof is completed.
\end{proof} 
By Theorem \ref{Main theorem 1}, Theorem \ref{finite m-weighted energy} and Lemma \ref{Approximate 2}, we have the following corollary.
\begin{corollary}\label{Approximate 1}
	Let $\chi:\R^-\rightarrow\R^-$ be an increasing function such that $\chi(-t)<0$ for every $t>0$  and $u\in\Ecm(\Omega)$. Then there exists a sequence $\{u_j\}\in\Eom(\Omega)$ such that $u_j\searrow u$ and 
	$$\lim\limits_{j\rightarrow\infty}\int\limits_\Omega(-\chi\circ u_j)(dd^cu_j)^m\wedge\omega^{n-m}=\int\limits_\Omega(-\chi\circ u)(dd^cu)^m\wedge\omega^{n-m}<+\infty.$$
\end{corollary}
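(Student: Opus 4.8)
The plan is to observe that this statement is a direct assembly of the three results cited just before it, so the entire task reduces to verifying that the hypotheses of Lemma \ref{Approximate 2} are met by any $u\in\Ecm(\Omega)$ under the standing assumption $\chi(-t)<0$ for all $t>0$. Concretely, Lemma \ref{Approximate 2} requires two inputs: that the function lies in $\mathcal{N}_m(\Omega)$, and that its $m$-weighted $\chi$-energy is finite. I would supply these one at a time and then simply quote the lemma.

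First I would fix $u\in\Ecm(\Omega)$. Since $\chi$ is increasing and satisfies $\chi(-t)<0$ for every $t>0$, the hypothesis of part (ii) of Theorem \ref{Main theorem 1} holds verbatim, and so $u\in\mathcal{N}_m(\Omega)$. This is the membership requirement of Lemma \ref{Approximate 2}. Next I would invoke Theorem \ref{finite m-weighted energy}, which applies to any increasing $\chi\colon\R^-\to\R^-$ and any $u\in\Ecm(\Omega)$, to conclude that
$$\int\limits_\Omega(-\chi\circ u)(dd^cu)^m\wedge\omega^{n-m}<+\infty.$$
This is precisely the finite-energy hypothesis of Lemma \ref{Approximate 2}.

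With both hypotheses in hand, I would apply Lemma \ref{Approximate 2} to $u$ to obtain a sequence $\{u_j\}\subset\Eom(\Omega)$ with $u_j\searrow u$ and
$$\lim\limits_{j\rightarrow\infty}\int\limits_\Omega(-\chi\circ u_j)(dd^cu_j)^m\wedge\omega^{n-m}=\int\limits_\Omega(-\chi\circ u)(dd^cu)^m\wedge\omega^{n-m},$$
and the common value is finite by the energy bound from Theorem \ref{finite m-weighted energy}. This yields the asserted limit together with the stated finiteness, completing the argument. There is no genuine obstacle here: all the analytic content (the construction of the approximating sequence via the comparison principle and the monotone-convergence passage to the limit) is already carried out inside Lemma \ref{Approximate 2}, and the only point to check is the cosmetic one that the shared sign condition $\chi(-t)<0$ for all $t>0$ simultaneously triggers Theorem \ref{Main theorem 1}(ii) and matches the hypothesis of the lemma.
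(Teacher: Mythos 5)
Your proposal is correct and follows exactly the route the paper intends: the paper derives this corollary by citing Theorem \ref{Main theorem 1}(ii) (to get $u\in\mathcal{N}_m(\Omega)$), Theorem \ref{finite m-weighted energy} (to get finite $\chi$-energy), and then Lemma \ref{Approximate 2}, which is precisely your assembly. Nothing is missing.
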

By Lemma \ref{Approximate 2} and Theorem \ref{Main theorem 1}, we have the following relationships between class $\Ecm$ and Cegrell classes with finite $m-$weighted energies.
\begin{proposition}\label{pro:finiteint}
	Let $\chi:\R^-\rightarrow\R^-$ be an increasing function. Then
	\begin{itemize}
		\item[(i)] If $\chi\not\equiv0$ then 
		$$\Ecm(\Omega)\subset\Big\{u\in\Em(\Omega): \int\limits_\Omega(-\chi\circ u)(dd^cu)^m\wedge\omega^{n-m}<+\infty\Big\}.$$
		\item[(ii)] If $\chi(-t)<0$ for all $t>0$ then
		$$\Ecm(\Omega)=\Big\{u\in\mathcal{N}_m(\Omega): \int\limits_\Omega(-\chi\circ u)(dd^cu)^m\wedge\omega^{n-m}<+\infty\Big\}.$$
	\end{itemize}
\end{proposition}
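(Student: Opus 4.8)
The plan is to derive both parts by assembling three structural results that have already been established: Theorem~\ref{Main theorem 1}, which locates $\Ecm(\Omega)$ inside the appropriate Cegrell class; Theorem~\ref{finite m-weighted energy}, which furnishes finiteness of the weighted energy; and Lemma~\ref{Approximate 2}, which supplies an approximating sequence whose energies converge. The proposition is essentially the statement that these three facts fit together to characterize $\Ecm(\Omega)$.

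For part (i), I would take $u\in\Ecm(\Omega)$. Since $\chi\not\equiv 0$, Theorem~\ref{Main theorem 1}(i) immediately gives $u\in\Em(\Omega)$, while Theorem~\ref{finite m-weighted energy} guarantees that $\int_\Omega(-\chi\circ u)(dd^cu)^m\wedge\omega^{n-m}<+\infty$. Together these place $u$ in the set on the right-hand side, which is the asserted inclusion; no further argument is needed.

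For part (ii), I first note that the hypothesis $\chi(-t)<0$ for all $t>0$ is strictly stronger than $\chi\not\equiv 0$. The inclusion $\subseteq$ then runs exactly as in part (i), except that Theorem~\ref{Main theorem 1}(ii) now delivers the sharper conclusion $u\in\mathcal{N}_m(\Omega)$, and Theorem~\ref{finite m-weighted energy} again supplies finiteness. For the reverse inclusion $\supseteq$, I would take $u\in\mathcal{N}_m(\Omega)$ with finite weighted energy and apply Lemma~\ref{Approximate 2}: it produces a decreasing sequence $\{u_j\}\subset\Eom(\Omega)$ with $u_j\searrow u$ such that the energies $\int_\Omega(-\chi\circ u_j)(dd^cu_j)^m\wedge\omega^{n-m}$ converge to the finite number $\int_\Omega(-\chi\circ u)(dd^cu)^m\wedge\omega^{n-m}$. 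Because a convergent sequence of reals is bounded, $\sup_j\int_\Omega(-\chi\circ u_j)(dd^cu_j)^m\wedge\omega^{n-m}<+\infty$, which is precisely the defining condition for membership in $\Ecm(\Omega)$.

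The genuine content lies entirely in the inclusion $\supseteq$ of part (ii): it is the only place where one must exhibit an admissible defining sequence, rather than merely cite an existing one, and that construction is exactly what Lemma~\ref{Approximate 2} provides. I therefore expect no real obstacle beyond a routine compatibility check, namely verifying that the hypotheses of Lemma~\ref{Approximate 2} — that $\chi(-t)<0$ for all $t>0$ and that $u\in\mathcal{N}_m(\Omega)$ has finite weighted energy — coincide with the conditions defining the right-hand set in (ii). Since they match verbatim, the passage from the convergent energies to the finite supremum is immediate, and the proof closes.
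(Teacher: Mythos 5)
Your proof is correct and takes essentially the same route as the paper: the paper presents Proposition~\ref{pro:finiteint} as an immediate consequence of Theorem~\ref{Main theorem 1}, the finiteness result (Theorem~\ref{finite m-weighted energy}, packaged with Corollary~\ref{Approximate 1}) and Lemma~\ref{Approximate 2}, which is exactly the assembly you describe. Your handling of the only substantive step---the reverse inclusion in (ii), where Lemma~\ref{Approximate 2} supplies the defining sequence and boundedness follows from convergence of the energies---is precisely the intended argument.
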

\section{In a special case of $\chi$}\label{sec4}
In this section, we are going to consider a special condition for $\chi$: $\chi(-2t)\geq a\chi(-t)$ with some $a>1$. First, we recall a lemma which will be useful later.
\begin{lemma}\cite[Lemma 2]{VVH16}\label{Compa 1}
	Let $\chi:\R^-\rightarrow\R^-$ be an increasing function such that  $\chi(-2t)\geq a\chi(-t)$ with some $a>1$. Suppose that $u,v\in\Eom(\Omega)$. Then the following hold: 
	\begin{itemize}
		\item[(i)] If $u\leq v$ on $\Omega$, then
		$$\int\limits_\Omega(-\chi\circ v)(dd^cv)^m\wedge\omega^{n-m}\leq 2^m\max(a,2)\int\limits_\Omega(-\chi\circ u)(dd^cu)^m\wedge\omega^{n-m}.$$
		\item[(ii)] For every $0\leq \lambda\leq 1$, we have
		\begin{align*}
		&\int\limits_\Omega(-\chi\circ(\lambda u+(1-\lambda)v))(dd^c(\lambda u+(1-\lambda)v))^m\wedge\omega^{n-m}\\
		&\leq 2^m\max(a,2)\Big(\int\limits_\Omega(-\chi\circ u)(dd^cu)^m\wedge\omega^{n-m}+\int\limits_\Omega(-\chi\circ v)(dd^cv)^m\wedge\omega^{n-m}\Big).
		\end{align*}
	\end{itemize}
\end{lemma}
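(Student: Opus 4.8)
The plan is to reduce both inequalities to a single estimate on the distribution functions
$$H(w;s):=\int_{\{w<-s\}}(dd^cw)^m\wedge\omega^{n-m},$$
and to control these by the $m$-capacity via Theorem \ref{Inequalities for Capacity}. Since no regularity of $\chi$ is assumed, I would phrase everything in terms of the Lebesgue--Stieltjes measure $d(-\chi(-s))$ of the increasing map $s\mapsto-\chi(-s)$ rather than $\chi'$; this makes an approximation by $C^1$ weights unnecessary. In both parts one may assume the right-hand side is finite, since otherwise there is nothing to prove, and all manipulations are legitimate because $u,v\in\Eom$ are bounded with vanishing boundary values.

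The key step, which I expect to drive the whole argument, is a capacitary comparison of sublevel sets. For (i), the hypothesis $u\le v$ gives $\{v<-2s\}\subset\{u<-2s\}$, so chaining the right-hand inequality of Theorem \ref{Inequalities for Capacity} applied to $v$ at level $2s$, the monotonicity of $\text{Cap}_{m,\Omega}$, and the left-hand inequality applied to $u$ (with both parameters equal to $s$) yields
\begin{align*}
\int_{\{v<-2s\}}(dd^cv)^m\wedge\omega^{n-m}
&\le(2s)^m\,\text{Cap}_{m,\Omega}(\{v<-2s\})\le(2s)^m\,\text{Cap}_{m,\Omega}(\{u<-2s\})\\
&\le 2^m\int_{\{u<-s\}}(dd^cu)^m\wedge\omega^{n-m},
\end{align*}
that is, $H(v;2s)\le 2^mH(u;s)$. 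For (ii) I would first note that $w:=\lambda u+(1-\lambda)v\in\Eom$ by Lemma \ref{lem:convexcone}(i), and that $\{w<-2s\}\subset\{u<-2s\}\cup\{v<-2s\}$ because a convex combination can fall below $-2s$ only if one of its two entries does. Then the same three ingredients, now using also the subadditivity of $\text{Cap}_{m,\Omega}$ (immediate from its definition as a supremum of integrals), give $H(w;2s)\le 2^m\big(H(u;s)+H(v;s)\big)$. Thus both parts reduce to one uniform sublevel estimate with the clean constant $2^m$.

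Finally I would insert these estimates into the layer-cake identity
$$\int_\Omega(-\chi\circ w)(dd^cw)^m\wedge\omega^{n-m}=(-\chi(0))\int_\Omega(dd^cw)^m\wedge\omega^{n-m}+\int_0^\infty H(w;s)\,d(-\chi(-s)).$$
The mass term is handled by the monotonicity of the total Hessian mass (for $u\le v$ in $\Eom$, and for $w$ via the sublevel estimate as $s\to0^+$) and is bounded by the relevant energies because $-\chi$ is increasing; the integral term is treated by substituting the sublevel estimate, changing variables $s=2r$, performing a Stieltjes integration by parts, and invoking the doubling hypothesis $-\chi(-2r)\le a(-\chi(-r))$ to trade $d(-\chi(-2r))$ for $a\,d(-\chi(-r))$. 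Collecting the factor $2^m$ from the capacitary step with the factor $a$ from the doubling, and absorbing the mass contribution, produces a constant bounded by $2^m a\le 2^m\max(a,2)$; in (ii) the two sublevel terms reproduce the sum $E_\chi(u)+E_\chi(v)$ with the same constant. The main technical obstacle is not the capacitary comparison but the bookkeeping of this Stieltjes integration by parts for a merely increasing, possibly discontinuous, weight: one must verify that the boundary contributions at $0$ and $\infty$ vanish or carry a favourable sign, which relies on the finiteness of the energy of $u$ (respectively of $u$ and $v$) to force $H(u;r)\to0$ fast enough as $r\to\infty$.
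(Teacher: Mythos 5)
The paper offers no proof of this lemma to compare against: it is quoted from \cite{VVH16}, so your proposal has to be judged on its own. Its core is correct and is a genuinely self-contained alternative built from tools already in the paper: since $\Eom(\Omega)\subset\Fm(\Omega)$, Theorem \ref{Inequalities for Capacity} applies, and your chain (right inequality for $v$ at level $2s$, monotonicity of $\text{Cap}_{m,\Omega}$ on $\{v<-2s\}\subset\{u<-2s\}$, left inequality for $u$ with $s=t$) does give $H(v;2s)\le 2^m H(u;s)$; likewise $\{w<-2s\}\subset\{u<-2s\}\cup\{v<-2s\}$ together with subadditivity of the capacity gives $H(w;2s)\le 2^m\bigl(H(u;s)+H(v;s)\bigr)$, with $w=\lambda u+(1-\lambda)v\in\Eom(\Omega)$ by Lemma \ref{lem:convexcone}(i). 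Two remarks on the second half: your worry about decay of $H(u;r)$ as $r\to\infty$ is vacuous, because functions in $\Eom(\Omega)$ are bounded, so $H(u;r)=0$ for $r>\|u\|_{L^\infty}$; and the Stieltjes integration by parts can be avoided entirely by using Fubini to rewrite $\int_0^\infty H(u;r)\,d\bigl(-\chi(-2r)\bigr)$ as $\int_\Omega\bigl(-\chi(2u)+\chi(0)\bigr)(dd^cu)^m\wedge\omega^{n-m}$ and then applying the doubling hypothesis pointwise. For continuous $\chi$ this closes the proof with the constant $2^m a\le 2^m\max(a,2)$, slightly better than stated.

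The genuine gap is the case of a merely increasing, discontinuous $\chi$, which you flag but misdiagnose: the difficulty is not boundary terms but your opening ``layer-cake identity.'' With the strict sublevel sets $H(w;s)=\int_{\{w<-s\}}(dd^cw)^m\wedge\omega^{n-m}$ it is not an identity; the exact formula uses $\{w\le-s\}$, and the difference is the mass that atoms of the image of $(dd^cw)^m\wedge\omega^{n-m}$ under $w$ place at jump points of $\chi$ (this really occurs, e.g.\ for $v=\max(|z|^2-1,-c)$ in the ball, whose Hessian measure charges $\{v=-c\}$, with $\chi$ jumping at $-c$). Since the discrepancy makes your first display an \emph{underestimate} of the left-hand energy, the quantity you must bound from above, the argument as written is incomplete. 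It can be repaired inside your framework: work with $\{w\le-s\}$ throughout, transfer to $u$ via left limits, and control jumps by the doubling consequence $-\lim_{t\uparrow -y}\chi(t)\le-\chi(-2y)\le a\bigl(-\chi(-y)\bigr)$; done crudely this yields the constant $2^ma^2$, which exceeds $2^m\max(a,2)$ once $a>\sqrt2$. To recover $2^ma$ one must exploit the full two-parameter form of Theorem \ref{Inequalities for Capacity} (levels $s$ and $(1+\delta)s$ with $\delta<1$, constant $((1+\delta)/\delta)^m$), using the slack $\delta<1$ to absorb the strict/non-strict conversion, and let $\delta\to1$ at the end. For every application of the lemma in this paper only the finiteness of the constant matters, so your argument suffices for the paper's purposes, but as a proof of the statement with the quoted constant it needs this extra step.
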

Our first goal is to relaxe the pointwise convergence
condition in the definition of $\Ecm$ to the almost everywhere convergence condition as follows.
\begin{theorem}\label{Convergence ae}
	Let $\chi:\R^-\rightarrow\R^-$ be an increasing function such that  $\chi(-2t)\geq a\chi(-t)$ with some $a>1$. Assume that there are
	$u_j\in\Eom (\Omega)$, $j\in\N$, such that $u_j$ converges almost everywhere to
	$u$ as $j\rightarrow\infty$ and
	$$\sup_{j>0}\int_{\Omega}(-\chi\circ u_j)(dd^cu_j)^m\wedge\omega^{n-m}<\infty.$$ Then $u\in\Ecm(\Omega)$.
\end{theorem}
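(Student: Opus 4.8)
The plan is to manufacture, out of the merely almost everywhere convergent sequence $(u_j)$, a genuinely \emph{decreasing} sequence in $\Eom(\Om)$ with uniformly bounded $\chi$-energy, since that is exactly what membership in $\Ecm(\Om)$ requires. Before constructing it, I record a useful reduction: under the doubling hypothesis $\chi(-2t)\ge a\chi(-t)$ with $a>1$, either $\chi\equiv0$ or else $\chi(-t)<0$ for every $t>0$. Indeed, if $\chi(-t_0)=0$ for some $t_0>0$, then the doubling inequality gives $\chi(-2t_0)\ge a\chi(-t_0)=0$, so $\chi(-2t_0)=0$, and inductively $\chi(-2^kt_0)=0$, whence $\chi(-\infty)=0$ and $\chi\equiv0$. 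The construction below works in either case, so I do not split into cases.

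I set $w_j:=\bigl(\sup_{l\ge j}u_l\bigr)^*$. Each $w_j$ is $m$-subharmonic, $\le 0$, decreasing in $j$, and satisfies $w_j\ge u_j$. Since every $u_j\in\Eom(\Om)$ is bounded with zero boundary values, $w_j$ is bounded, has zero boundary values, and by the monotonicity of total Hessian mass $\int_\Om H_m(w_j)\le\int_\Om H_m(u_j)<\infty$; hence $w_j\in\Eom(\Om)$. Writing $w:=\lim_j w_j$, we have $w=(\limsup_l u_l)^*=u$ almost everywhere, and since $u$ is finite almost everywhere we get $w\not\equiv-\infty$. Thus $w$ is the $m$-subharmonic representative of $u$, and everything reduces to bounding $\sup_j\int_\Om(-\chi\circ w_j)H_m(w_j)$.

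The energy bound on the finite maxima is where Lemma \ref{Compa 1} enters. For $N\ge j$ set $\phi_j^{(N)}:=\max(u_j,\dots,u_N)\in\Eom(\Om)$. Since $\phi_j^{(N)}\ge u_j$, Lemma \ref{Compa 1}(i) applied with the pair $(u_j,\phi_j^{(N)})$ yields
$$\int_\Om(-\chi\circ\phi_j^{(N)})H_m(\phi_j^{(N)})\le 2^m\max(a,2)\int_\Om(-\chi\circ u_j)H_m(u_j)\le 2^m\max(a,2)\,M=:CM,$$
a bound uniform in both $N$ and $j$. It then remains to pass to the limit $N\to\infty$. As $\phi_j^{(N)}\nearrow$ with $\bigl(\lim_N\phi_j^{(N)}\bigr)^*=w_j$, the convergence theorem for increasing sequences of $m$-subharmonic functions gives $H_m(\phi_j^{(N)})\rightharpoonup H_m(w_j)$. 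Because $\chi$ may be discontinuous, I first fix a continuous increasing $\chi_p$ with $\chi_p\searrow\chi$; then $-\chi_p\circ w_j$ is continuous, hence lower semicontinuous, and, using $\phi_j^{(N)}\le w_j$ together with $\chi_p\ge\chi$,
$$\int_\Om(-\chi_p\circ w_j)H_m(\phi_j^{(N)})\le\int_\Om(-\chi_p\circ\phi_j^{(N)})H_m(\phi_j^{(N)})\le\int_\Om(-\chi\circ\phi_j^{(N)})H_m(\phi_j^{(N)})\le CM.$$
By lower semicontinuity of $\nu\mapsto\int_\Om g\,d\nu$ for the fixed nonnegative lower semicontinuous weight $g=-\chi_p\circ w_j$ under vague convergence, $\int_\Om(-\chi_p\circ w_j)H_m(w_j)\le CM$, and letting $p\to\infty$ by monotone convergence gives $\int_\Om(-\chi\circ w_j)H_m(w_j)\le CM$. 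Hence $\sup_j\int_\Om(-\chi\circ w_j)H_m(w_j)\le CM<\infty$; as $w_j\searrow w=u$ with $w_j\in\Eom(\Om)$, the definition of $\Ecm(\Om)$ gives $u\in\Ecm(\Om)$.

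The main obstacle is precisely this last limiting step: upgrading the uniform estimate on the finite maxima $\phi_j^{(N)}$ to an estimate on the regularized supremum $w_j$. The difficulty is twofold, namely the vague convergence of the Hessian measures $H_m(\phi_j^{(N)})$ along the increasing sequence and the possible discontinuity of $\chi$; the latter is what forces the auxiliary continuous approximation $\chi_p\searrow\chi$ to be combined with the lower semicontinuity of the energy functional, rather than a naive direct passage.
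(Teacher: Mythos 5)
Your overall strategy --- replace the a.e.\ convergent sequence by the decreasing regularized suprema $w_j=(\sup_{l\ge j}u_l)^*$ and control their $\chi$-energy through the doubling Lemma \ref{Compa 1} --- is exactly the strategy of the paper's proof, which works with $v_k=\bigl(\sup_{j\ge k}\max\{u,u_j\}\bigr)^*$. However, the way you carry out the energy estimate has a genuine gap. You estimate the finite maxima $\phi_j^{(N)}=\max(u_j,\dots,u_N)$ and then try to pass to the limit $N\to\infty$ via weak convergence of the Hessian measures, lower semicontinuity, and a sequence of \emph{continuous} increasing functions $\chi_p\searrow\chi$. Such a sequence exists only when $\chi$ is upper semicontinuous (equivalently right-continuous, since $\chi$ is increasing), and the hypotheses of the theorem do not guarantee this: for instance $\chi=-1$ on $(-1,0]$ and $\chi=-2$ on $(-\infty,-1]$ is increasing and satisfies $\chi(-2t)\ge 2\chi(-t)$ for all $t>0$, yet it is not right-continuous at $-1$, so no sequence of continuous functions can decrease pointwise to it. For such admissible weights the limiting step as written breaks down. (A smaller slip in the same step: $-\chi_p\circ w_j$ is not continuous, because $w_j$ is only upper semicontinuous; it is lower semicontinuous, which is what your portmanteau argument actually needs.)

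The gap is easy to close, and closing it collapses your proof onto the paper's. Since $w_j\ge u_j$ and $w_j=\max(u_j,w_j)$, Lemma \ref{lem:convexcone}(i) gives $w_j\in\Eom(\Omega)$, and then Lemma \ref{Compa 1}(i) applies \emph{directly} to the pair $(u_j,w_j)$, yielding
$\int_\Omega(-\chi\circ w_j)(dd^cw_j)^m\wedge\omega^{n-m}\le 2^m\max(a,2)\int_\Omega(-\chi\circ u_j)(dd^cu_j)^m\wedge\omega^{n-m}\le 2^m\max(a,2)M$
uniformly in $j$; no finite-maxima approximation, no weak convergence, and no regularity of $\chi$ beyond monotonicity is needed. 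This is precisely how the paper bounds the energy of its $v_k$ (it then adds a further, strictly speaking redundant, argument with a continuous decreasing sequence from Lu's approximation theorem). Alternatively, your limiting argument could be salvaged by using the doubling condition itself: the right-continuous regularization $\chi^{+}(t)=\lim_{s\to t^{+}}\chi(s)$ is upper semicontinuous, still doubling, and satisfies $-\chi\le a(-\chi^{+})$, so the bound you obtain for $\chi^{+}$ transfers to $\chi$ --- but the direct application of Lemma \ref{Compa 1} makes this detour unnecessary.
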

\begin{proof}
		For every $k\geq 1$, we denote
		\begin{center}
			$u^k(z)=\sup\limits_{j\geq k}\max \{u, u_j\}$ and $v_k=(u^k)^*$.
		\end{center}
		Then, we have
		\begin{itemize}
			\item[(i)] $u^k$ converges to $u$ almost everywhere;
			\item [(ii)]$v_k:=(u^k)^*\in \SHm^-(\Omega)$ for all $k\geq 1$;
			\item [(iii)] $v_k$ is a decreasing sequence satisfying $v_k\geq u$ for every $k\geq 1$;
			\item [(iv)] $v_k=u^k$ almost everywhere.
		\end{itemize}
		By (i) and (iv),  we have $\lim\limits_{k\to\infty}v_k=u$ almost everywhere. Since 
		$u$ and $\lim\limits_{k\to\infty}v_k$ are subharmonic functions, we get $u=\lim\limits_{k\to\infty}v_k$.
		
		Since $ v_k\in\SHm^-(\Omega)$ and $v_k\geq u_k$, we have $v_k\in\Eom (\Omega)$. Then, by using Lemma
		\ref{Compa 1}, we obtain
		\begin{center}
			$C:=\sup\limits_{j>0}\int\limits_{\Omega}(-\chi\circ u_j)(dd^cu_j)^m\wedge\omega^{n-m}\geq\frac{1}{2^m\max(a,2)} \int\limits_{\Omega}(-\chi\circ v_k)(dd^cv_k)^m\wedge\omega^{n-m},$
		\end{center}
		for every $k\geq 1$.
		
		Now, it follows from \cite[Theorem 3.1]{Lu15} that there exists a decreasing sequence
		$w_k\in\Eom (\Omega)\cap C(\Omega)$ such that $\lim\limits_{j\to\infty}w_j(z)=u(z)$ in $\Omega$.
		Replacing $w_j$ by $(1-j^{-1})w_j$, we can assume that $w_j(z)>u(z)$
		for every $j>0, z\in\Omega$. Applying Lemma \ref{Compa 1}, we have, for every $j, k>0$,
		\begin{align*}
			\int\limits_{\{v_k<w_j\}}(-\chi\circ w_j)(dd^cw_j)^m\wedge\omega^{n-m}=& \int\limits_{\{v_k<w_j\}}(-\chi\circ \max(v_k,w_j))(dd^c\max(v_k,w_j))^m\wedge\omega^{n-m}\\
			\leq& \int\limits_{\Omega}(-\chi\circ \max(v_k,w_j))(dd^c\max(v_k,w_j))^m\wedge\omega^{n-m}\\
			\leq&\ 2^m\max(a,2) \int\limits_{\Omega}(-\chi\circ v_k))(dd^cv_k)^m\wedge\omega^{n-m}\\
			= &(2^m\max(a,2))^2\ C.
		\end{align*}
		Letting $k\rightarrow \infty$, we get,
		\begin{center}
			$\int\limits_{\Omega}(-\chi\circ w_j)(dd^cw_j)^n\leq (2^m\max(a,2))^2 C,$
		\end{center}
		for every $j>0$, and so that 
		$$\sup\limits_j\int\limits_{\Omega}(-\chi\circ w_j)(dd^cw_j)^m\wedge\omega^{n-m}<+\infty.$$
		Therefore, $u\in\Ecm(\Omega)$.
\end{proof}
As a consequence, we will see that the almost everywhere limit of a sequence of functions of class $\Ecm$ whose bounded $m-$weighted energies also belong to this class as follows.
\begin{theorem}\label{thm:convergence}
		Let $\chi:\R^-\rightarrow\R^-$ be an increasing function such that  $\chi(-2t)\geq a\chi(-t)$ with some $a>1$. Assume that there are
		$u_j\in\Ecm (\Omega)$, $j\in\N$, such that $u_j$ converges almost everywhere to
		$u$ as $j\rightarrow\infty$ and
		$$\sup_{j>0}\int_{\Omega}(-\chi\circ u_j)(dd^cu_j)^m\wedge\omega^{n-m}<\infty.$$ Then $u\in\Ecm(\Omega)$.
\end{theorem}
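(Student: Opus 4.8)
The plan is to reduce this statement to the already established Theorem \ref{Convergence ae}, which is the identical assertion but for approximating sequences lying in the test class $\Eom(\Omega)$ rather than in $\Ecm(\Omega)$. The bridge between the two is Corollary \ref{Approximate 1}, which lets each $u_j\in\Ecm(\Omega)$ be approximated from above by test functions whose $\chi$-energies converge to that of $u_j$. So the whole proof consists in manufacturing, by a diagonal procedure, a single sequence in $\Eom(\Omega)$ that converges to $u$ almost everywhere and has uniformly bounded $\chi$-energy, and then feeding it to Theorem \ref{Convergence ae}.

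First I would record an elementary consequence of the doubling hypothesis: if $\chi(-2t)\geq a\chi(-t)$ with $a>1$, then either $\chi\equiv 0$ or $\chi(-t)<0$ for every $t>0$. Indeed, if $\chi(-t_0)=0$ for some $t_0>0$, monotonicity forces $\chi\equiv 0$ on $[-t_0,0]$, and iterating $\chi(-2^k t_0)\geq a\chi(-2^{k-1}t_0)$ spreads this to all of $\R^-$. The degenerate case $\chi\equiv 0$ is handled by the same argument below, using the definition of $\Ecm(\Omega)$ directly in place of Corollary \ref{Approximate 1} (the energy condition then being vacuous), so I assume $\chi(-t)<0$ for all $t>0$, which is exactly the hypothesis required to apply Corollary \ref{Approximate 1}.

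For each fixed $j$, Corollary \ref{Approximate 1} applied to $u_j\in\Ecm(\Omega)$ yields $\{u_{j,k}\}_k\subset\Eom(\Omega)$ with $u_{j,k}\searrow u_j$ as $k\to\infty$ and $\int_\Omega(-\chi\circ u_{j,k})(dd^cu_{j,k})^m\wedge\omega^{n-m}\to\int_\Omega(-\chi\circ u_j)(dd^cu_j)^m\wedge\omega^{n-m}$. Fixing an exhaustion $\Omega_1\Subset\Omega_2\Subset\cdots$ of $\Omega$, I would choose for each $j$ an index $k(j)$ so large that $w_j:=u_{j,k(j)}\in\Eom(\Omega)$ satisfies simultaneously
$$\int_\Omega(-\chi\circ w_j)(dd^cw_j)^m\wedge\omega^{n-m}\leq \int_\Omega(-\chi\circ u_j)(dd^cu_j)^m\wedge\omega^{n-m}+1$$
and, since $u_{j,k}-u_j\searrow 0$ pointwise and hence in measure on the relatively compact set $\Omega_j$,
$$\mathrm{V}_{2n}\big(\{z\in\Omega_j:\ w_j(z)-u_j(z)>2^{-j}\}\big)<2^{-j}.$$
The first inequality gives $\sup_j\int_\Omega(-\chi\circ w_j)(dd^cw_j)^m\wedge\omega^{n-m}<\infty$. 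For the second, Borel--Cantelli shows that for almost every $z$ one has $z\in\Omega_j$ and $w_j(z)-u_j(z)\leq 2^{-j}$ for all large $j$, so $w_j-u_j\to 0$ almost everywhere; combined with the hypothesis $u_j\to u$ almost everywhere, this gives $w_j\to u$ almost everywhere. Theorem \ref{Convergence ae} applied to $\{w_j\}\subset\Eom(\Omega)$ then yields $u\in\Ecm(\Omega)$.

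The main obstacle is this diagonal extraction: a single sequence $\{w_j\}$ drawn from the double array $\{u_{j,k}\}$ must converge to $u$ almost everywhere while keeping the $\chi$-energies uniformly bounded. The energy bound is immediate from the convergence in Corollary \ref{Approximate 1}, but the almost-everywhere convergence needs care, since $\{w_j\}$ is not monotone and so monotone convergence does not apply directly; I instead pass through convergence in measure on each $\Omega_j$ and upgrade to pointwise convergence of $w_j-u_j$ via Borel--Cantelli. Once this is secured, the statement follows cleanly from Theorem \ref{Convergence ae}.
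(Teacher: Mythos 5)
Your proposal is correct and follows essentially the same route as the paper: approximate each $u_j$ from above by test functions via Corollary \ref{Approximate 1}, extract a diagonal sequence in $\Eom(\Omega)$, and feed it to Theorem \ref{Convergence ae}. In fact your execution is more careful than the paper's, which simply sets $v_j=u_{j,j}$ without justifying that this naive diagonal converges almost everywhere to $u$ or has uniformly bounded $\chi$-energies; your selection of indices $k(j)$ (via convergence in measure on an exhaustion plus Borel--Cantelli, together with the energy-within-$1$ condition), and your observation that the doubling hypothesis forces $\chi(-t)<0$ for all $t>0$ unless $\chi\equiv 0$ (so that Corollary \ref{Approximate 1} is actually applicable), fill exactly the steps the paper leaves implicit.
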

\begin{proof}
	By Corollary \ref{Approximate 1}, for each $u_j$, there exists a sequence $\{u_{j,k}\}_{k=1}^\infty\subset\Eom(\Omega)$ such that $u_{j,k}\searrow u_j$ when $k\rightarrow+\infty$ and
	$$\lim\limits_{k\rightarrow\infty} \int\limits_\Omega(-\chi\circ u_{j,k})(dd^cu_{j,k})^m\wedge\omega^{n-m}=\int\limits_\Omega(-\chi\circ u_{j})(dd^cu_{j})^m\wedge\omega^{n-m}.$$
	We set $v_j=u_{j,j}$. Then $\{v_j\}\subset\Eom(\Omega)$, $v_j$ converges almost everywhere to $u$ and
	$$\sup_{j>0}\int_{\Omega}(-\chi\circ v_j)(dd^cv_j)^m\wedge\omega^{n-m}<\infty.$$ 
	By Theorem \ref{Convergence ae}, we have $u\in\Ecm(\Omega)$ as desired.
\end{proof}
Our last purpose is to create elements in class $\Ecm$ by integrating.
\begin{theorem}\label{thm:integration}
		 Let $(X,d,\mu)$ be a totally bounded metric probability space on $\Omega$ and $\chi:\R^-\rightarrow\R^-$ be an increasing function such that  $\chi(-2t)\geq a\chi(-t)$ with some $a>1$. Assume that $u:\Omega\times X\rightarrow[-\infty,0)$ satisfies the following properties
		 \begin{itemize}
		 	\item[i,]  $u(\cdot,a)\in\Ecm(\Omega)$ for every $a\in X$,
		 	\item[ii,]  $\sup\limits_{a\in X}\ \int\limits_\Omega(-\chi\circ u(\cdot,a))(dd^cu(\cdot,a))^m\wedge\omega^{n-m}=M<+\infty,$
		 	\item[iii,] $u(z,\cdot)$ is upper semicontinuous on $X$ for every $z\in\Omega$.
		 \end{itemize}
		 Then
		 $$\tilde{u}(z)=\int\limits_X u(z,a)d\mu(a)\in\Ecm(\Omega).$$
\end{theorem}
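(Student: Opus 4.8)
The plan is to reduce everything to the almost-everywhere convergence Theorem \ref{thm:convergence}. Concretely, I will manufacture a sequence $\tilde u_n\in\Ecm(\Omega)$ with $\tilde u_n\to\tilde u$ almost everywhere on $\Omega$ and $\sup_n\int_\Omega(-\chi\circ\tilde u_n)(dd^c\tilde u_n)^m\wedge\omega^{n-m}<\infty$; then Theorem \ref{thm:convergence} immediately yields $\tilde u\in\Ecm(\Omega)$. The functions $\tilde u_n$ will be finite Riemann-type averages of the slices $u(\cdot,a)$. First I record that $\tilde u$ is a well-defined element of $\SHm^-(\Omega)$ with $\tilde u\not\equiv-\infty$: since each slice $u(\cdot,a)\le 0$ is $m$-subharmonic and, by (iii), $u(z,\cdot)$ is upper semicontinuous (hence Borel) in $a$, the integral $\tilde u(z)=\int_X u(z,a)\,d\mu(a)$ is upper semicontinuous and satisfies the sub-mean-value inequalities of an $m$-subharmonic function; its finiteness follows a posteriori from the uniform energy bound below. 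I also note that the doubling hypothesis forces $\chi(-t)<0$ for all $t>0$ (if $\chi(-t_0)=0$ for some $t_0$, then $\chi(-2^kt_0)=0$ for all $k$ and monotonicity gives $\chi\equiv0$, a trivial case), so Corollary \ref{Approximate 1} is available.

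Next, the discretization. Using that $(X,d)$ is totally bounded, for each $n$ I choose a finite Borel partition $X=\bigsqcup_{i=1}^{N_n}X_i^{(n)}$ with $\max_i\operatorname{diam}(X_i^{(n)})\to 0$, the partitions nested in $n$, and I pick sample points $a_i^{(n)}\in X_i^{(n)}$. Setting $\lambda_i^{(n)}=\mu(X_i^{(n)})$ (so $\sum_i\lambda_i^{(n)}=\mu(X)=1$) I define the finite convex combination
$$\tilde u_n(z)=\sum_{i=1}^{N_n}\lambda_i^{(n)}\,u(z,a_i^{(n)}).$$
Each $\tilde u_n$ lies in $\Ecm(\Omega)$: approximating every slice $u(\cdot,a_i^{(n)})$ from above by a sequence in $\Eom(\Omega)$ via Corollary \ref{Approximate 1} and invoking Lemma \ref{Compa 1}(ii), iterated over the finitely many indices $i$, exhibits $\tilde u_n$ as a decreasing limit of $\Eom(\Omega)$ functions with finite $\chi$-energy. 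For the almost-everywhere convergence $\tilde u_n\to\tilde u$ I use hypothesis (iii): for fixed $z$ the tagged sums $\sum_i\lambda_i^{(n)}u(z,a_i^{(n)})$ must converge to $\int_X u(z,a)\,d\mu(a)$ as the mesh shrinks. Because $u(z,\cdot)$ is merely upper semicontinuous, the cleanest way to secure this is to first replace $u(z,\cdot)$ by its sup-convolutions $\phi_k(z,a)=\sup_{b\in X}\big(u(z,b)-k\,d(a,b)\big)$, which are Lipschitz (hence continuous) in $a$, $m$-subharmonic in $z$, decrease to $u(z,a)$ as $k\to\infty$, and dominate $u$; for a continuous integrand the tagged sums genuinely converge, and a diagonal argument in $(n,k)$ then gives $\tilde u_n\to\tilde u$ almost everywhere.

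The crux is the uniform energy bound $\sup_n\int_\Omega(-\chi\circ\tilde u_n)(dd^c\tilde u_n)^m\wedge\omega^{n-m}<\infty$. Iterating the two-function estimate of Lemma \ref{Compa 1}(ii) over the $N_n$ terms is useless here, since the resulting constant $\big(2^m\max(a,2)\big)^{N_n}$ blows up as $n\to\infty$. Instead I will prove a Jensen-type convex-combination inequality with a constant depending only on $m$ and $a$, and not on the number of terms: for $v_1,\dots,v_N\in\Eom(\Omega)$ and weights $\lambda_i\ge 0$ with $\sum_i\lambda_i=1$,
$$\int_\Omega\big(-\chi\circ\textstyle\sum_i\lambda_i v_i\big)\big(dd^c\textstyle\sum_i\lambda_i v_i\big)^m\wedge\omega^{n-m}\ \le\ C_{m,a}\sum_i\lambda_i\int_\Omega(-\chi\circ v_i)(dd^c v_i)^m\wedge\omega^{n-m}.$$
The proof expands $\big(dd^c\sum_i\lambda_i v_i\big)^m$ by multilinearity into mixed terms $dd^c v_{i_1}\wedge\cdots\wedge dd^c v_{i_m}\wedge\omega^{n-m}$, estimates each mixed integral by the corresponding pure energies through the mixed $m$-Hessian (Cegrell-type) inequalities, and controls the weight $-\chi\circ\sum_i\lambda_i v_i$ by comparing its argument to the individual $v_{i_j}$ and invoking the doubling hypothesis $\chi(-2t)\ge a\chi(-t)$, equivalently $(-\chi)(2t)\le a(-\chi)(t)$, to absorb the rescaling into $C_{m,a}$. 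Granting this inequality, applying it to $\tilde u_n=\sum_i\lambda_i^{(n)}u(\cdot,a_i^{(n)})$ and using hypothesis (ii) gives $\int_\Omega(-\chi\circ\tilde u_n)(dd^c\tilde u_n)^m\wedge\omega^{n-m}\le C_{m,a}\sum_i\lambda_i^{(n)}M=C_{m,a}M$, uniformly in $n$.

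With $\tilde u_n\in\Ecm(\Omega)$, $\tilde u_n\to\tilde u$ almost everywhere, and the uniform bound just obtained, Theorem \ref{thm:convergence} applies and gives $\tilde u\in\Ecm(\Omega)$, completing the proof. I expect the main obstacle to be the Jensen-type inequality of the third paragraph: obtaining a constant independent of the number $N_n$ of sample points is exactly what makes the passage to the limit legitimate, and it is precisely where the doubling condition $\chi(-2t)\ge a\chi(-t)$ is essential. A secondary technical point is the almost-everywhere convergence of the tagged averages for the merely upper semicontinuous family $u(z,\cdot)$, which the sup-convolution regularization above is designed to handle.
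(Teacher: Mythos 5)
Your reduction to Theorem \ref{thm:convergence} breaks at the step where you assert that, for fixed $z$, the tagged Riemann sums $\sum_i\lambda_i^{(n)}u(z,a_i^{(n)})$ converge to $\int_X u(z,a)\,d\mu(a)$ as the mesh shrinks. For an integrand that is merely upper semicontinuous in $a$ this is false, and your sup-convolution device cannot repair it: since $\phi_k\geq u$, comparing tagged sums of $u$ with those of $\phi_k$ only yields $\limsup_n\tilde u_n(z)\leq\tilde u(z)$; nothing prevents the tags from landing where the slices are abnormally negative, so the matching bound $\liminf_n\tilde u_n(z)\geq\tilde u(z)$ can fail. Concretely, let $X$ be a segment in $\Omega$ identified with $[0,1]$ with Lebesgue measure, let $K\subset[0,1]$ be a fat Cantor set (closed, nowhere dense, $\mu(K)=\tfrac12$), and set $u(z,a)=\rho(z)$ for $a\notin K$ and $u(z,a)=\tfrac12\rho(z)$ for $a\in K$, where $\rho\in\Eom(\Omega)$, $\rho<0$, is the exhaustion function. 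All three hypotheses of the theorem hold (for each $z$ the map $a\mapsto u(z,a)$ is upper semicontinuous because $\rho(z)<0$ and the indicator-type function involved is lower semicontinuous), and $\tilde u=\tfrac34\rho$. Yet if one takes nested dyadic partitions and chooses every tag in the dense open set $[0,1]\setminus K$ (an admissible choice, since every dyadic interval meets it), then $\tilde u_n\equiv\rho$ for all $n$, so $\tilde u_n\to\rho\neq\tilde u$ everywhere; Theorem \ref{thm:convergence} then certifies $\rho\in\Ecm(\Omega)$ and says nothing about $\tilde u$. The paper avoids this problem entirely by discretizing with suprema rather than point values: it sets $u_j(z)=\sum_k\mu(U_k^j)\sup_{a\in U_k^j}u(z,a)$ over nested partitions, so that $u_j\geq\tilde u$, $u_j$ decreases in $j$ by nestedness, and $u_j\searrow\tilde u$ follows from upper semicontinuity plus Fatou --- the one-sided control that upper semicontinuity genuinely provides. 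The sample-point combinations $\sum_k\mu(U_k^j)u(\cdot,a_k)$ appear in the paper only as minorants, to place $(u_j)^*$ in $\Eom(\Omega)$ and to run the energy estimate through Lemma \ref{Compa 1}(i); membership $\tilde u\in\Ecm(\Omega)$ then comes straight from Definition \ref{def:chi}, with no need for the convergence theorem at all.

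On your third paragraph: your diagnosis is accurate that iterating the two-function Lemma \ref{Compa 1}(ii) produces a constant of order $(2^m\max(a,2))^{N_n}$ and is therefore useless, and that an $N$-independent weighted inequality $\int_\Omega(-\chi\circ\sum_i\lambda_iv_i)(dd^c\sum_i\lambda_iv_i)^m\wedge\omega^{n-m}\leq C_{m,a}\sum_i\lambda_i\int_\Omega(-\chi\circ v_i)(dd^cv_i)^m\wedge\omega^{n-m}$ is what is really needed. In fact the paper's own proof tacitly uses exactly this weighted form (its displayed estimate, which bounds an unweighted sum of $m_j$ energies by $M$, only makes sense once the weights $\mu(U_k^j)$ are reinstated) while citing only the two-function lemma, so on this point you are, if anything, more careful than the paper. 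But in your proposal the inequality remains a sketch, and the sketch has a real hole: after expanding $(dd^c\sum_i\lambda_iv_i)^m$ by multilinearity, the weight $-\chi\circ\sum_i\lambda_iv_i$ still couples all $N$ functions, and the natural pointwise bound $-\chi\circ\sum_i\lambda_iv_i\leq\sum_i(-\chi\circ v_i)$ reintroduces the $N$-dependence you are trying to avoid. So even if the convergence gap above were repaired, the uniform energy bound would still require an actual proof, not an outline.
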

\begin{proof}
	By \cite[Theorem 2.6.5]{Klimek}, it follows that the restriction of $\tilde{u}$ on $\Omega\cap W$ is either plurisubharmonic or $-\infty$ for every $(n-m+1)$-dimensional subspace $W$ of $\mathbb{C}^n$, so that $\tilde{u}\in\SHm(\Omega)$ or $\tilde{u}\equiv-\infty$.
	
	Since $X$ is totally bounded, we can divide $X$ into a finite pairwise disjoint collection of sets of diameter at most $\frac{1}{2}$. Since each set of this collection is also totally bounded, we can divide it into a finite pairwise disjoint collection of sets of diameter at most $\frac{1}{4}$. By repeating this procedure, for each $j\in\N$, we can divide $X$ into a finite pairwise disjoint collection $\{U^j_k\}_{k=1}^{m_j}$ such that $d(U^j_k)=\sup_{x,y\in U^j_k}\{\|x-y\|\}\leq\frac{1}{2^j}$ for every $1\leq k\leq m_j$, and there exists natural numbers $0=p_{j,0}< p_{j,1}<p_{j,2}<...<p_{j,m_{j-1}}< m_{j}$ such that, for $k\in\{1,2,...,m_{j}\}$,
	$$ U^j_k=\bigcup_{k'=p_{j,k-1}+1}^{p_{j,k}}U^{j+1}_{k'}.$$
	
	For $j\in\N$, we define 
	$$u_j(z)=\sum\limits_{k=1}^{m_j}\mu(U^j_k)\sup\limits_{a\in U^j_k}u(z,a)\text{ and }\tilde{u}_j=u_j^*.$$
	We first claim that $\tilde{u}_j\in\Eom(\Omega)$ for every $j$.
	We observe that $\tilde{u}_j\in\SHm^-(\Omega)$ for every $j\in\N$. Fix $j\in\N$, we choose arbitrarily $a_k\in U^j_k$ for every $k\in\{1,...,m_j\}$. Then $ \tilde{u}_j\geq\sum\limits_{k=1}^{m_j}\mu(U^j_k)u(\cdot,a_k)\in\Eom(\Omega)$, and hence $\tilde{u}_j\in\Eom(\Omega)$, as claimed. 
	
	Next, we claim that $\tilde{u}_j\searrow \tilde{u}$. To do this, we first show that $u_j \searrow \tilde{u}$. Indeed, we observe that
	\begin{align*}
	u_j(z)&=\sum\limits_{k=1}^{m_j}\mu(U^j_k)\sup\limits_{a\in U^j_k}u(z,a)=\sum\limits_{k=1}^{m_j}\Big(\sum\limits_{k'=p_{j,k-1}+1}^{p_{j,k}}\mu(U^{j+1}_{k'})\sup\limits_{a\in U^j_k}u(z,a)\Big)\\
	&\geq \sum\limits_{k=1}^{m_j}\Big(\sum\limits_{k'=p_{j,k-1}+1}^{p_{j,k}}\mu(U^{j+1}_{k'})\sup\limits_{a\in U^{j+1}_{k'}}u(z,a)\Big)\\
	&= \sum\limits_{k'=1}^{m_{j+1}}\mu(U^{j+1}_{k'})\sup\limits_{a\in U^{j+1}_{k'}}u(z,a)\\
	&=u_{j+1}(z),
	\end{align*}
	and 
	$$ u_j(z)=\int\limits_X\sum\limits_{k=1}^{m_j}\mathds{1}_{U^j_k}(a)\sup_{a\in U^j_k}u(z,a)d\mu(a)\geq \int\limits_X\sum\limits_{k=1}^{m_j}\mathds{1}_{U^j_k}(a)u(z,a)d\mu(a)=\tilde{u}(z),$$
	where
	 $\mathds{1}_{U^j_k}$ is the characteristic function of ${U^j_k}$. Also, by the upper semicontinuity of $u(z,\cdot)$, it follows that
	 $$u(z,a)\geq \lim\limits_{j\rightarrow\infty}\Big(\sup\limits_{|b-a|\leq 2^{-j}}u(z,b)\Big)\geq \lim\limits_{j\rightarrow\infty}\sum\limits_{k=1}^{m_j}\mathds{1}_{U^j_k}(a)\sup_{a\in U^j_k}u(z,a),$$
	 and then, by Fatou's lemma, we obtain
	 $$\tilde{u}(z)=\int\limits_Xu(z,a)d\mu(a)\geq \lim\limits_{j\rightarrow\infty}\sum\limits_{k=1}^{m_j}\int\limits_X\mathds{1}_{U^j_k}(a)\sup_{a\in U^j_k}u(z,a)d\mu(a)=\lim\limits_{j\rightarrow\infty} u_j(z).$$
	Therefore, $u_j\searrow\tilde{u}$. Since $u_j=\tilde{u}_j$ almost everywhere, then $\tilde{u}_j\searrow\tilde{u}$ almost everywhere. Then, by the fact that $\lim\limits_{j\rightarrow\infty}\tilde{u}_j$ is either plurisubharmonic or identically $-\infty$, we have $\tilde{u}_j\searrow\tilde{u}$ everywhere, as claimed.
	
	Finally, we claim that: $$\sup\limits_{j\in\N}\int\limits_\Omega(-\chi\circ \tilde{u}_j)(dd^c\tilde{u}_j)^m\wedge\omega^{n-m}< \infty.$$
	Indeed, by Lemma \ref{Compa 1} part (i), it follows that
	\begin{align*}
	&\int\limits_\Omega(-\chi\circ \tilde{u}_j)(dd^c\tilde{u}_j)^m\wedge\omega^{n-m}\\
	&\leq 2^m\max(a,2)\int\limits_\Omega\Big(-\chi\circ \big(\sum\limits_{k=1}^{m_j}\mu(U^j_k)u(z,a_k)\big)\Big)\Big(dd^c\big(\sum\limits_{k=1}^{m_j}\mu(U^j_k)u(z,a_k)\big)\Big)^m\wedge\omega^{n-m}.
	\end{align*}
	Then by Lemma \ref{Compa 1} part (ii), and the fact that $\sum\limits_{k=1}^{m_j}\mu(U^j_k)=1$, we obtain, for every $j\in\N$,
	\begin{align*}
	&\int\limits_\Omega(-\chi\circ \tilde{u}_j)(dd^c\tilde{u}_j)^m\wedge\omega^{n-m}\\
	&\leq2^{2m}\max(a^2,4)\sum\limits_{k=1}^{m_j}\int\limits_\Omega\Big(-\chi\circ u(z,a_k)\Big)\Big(dd^c\big(u(z,a_k)\big)\Big)^m\wedge\omega^{n-m}\\
	&\leq 2^{2m}\max(a^2,4)M,
	\end{align*}
	which proves our claim.
	
	We have proved that $\{\tilde{u}_j\}\subset\Eom(\Omega)$, $\tilde{u}_j\searrow \tilde{u}$ and $$\sup\limits_{j\in\N}\int\limits_\Omega(-\chi\circ \tilde{u}_j)(dd^c\tilde{u}_j)^m\wedge\omega^{n-m}< \infty.$$
	Then, by the definition of class $\Ecm$, it follows that $\tilde{u}\in\Ecm(\Omega)$ (and, as a consequence, $\tilde{u}\not\equiv-\infty$), as desired.
\end{proof}

\section*{Acknowledgments}
The part of this work was done while the authors were visiting to Vietnam Institute for Advanced
Study in Mathematics (VIASM). The authors would like to thank the VIASM for
hospitality and support. 

\end{document}